\newtheorem{theorem}{Theorem}%[section]
\newtheorem{lemma}[theorem]{Lemma}
\newtheorem{proposition}[theorem]{Proposition}
\newtheorem{definition}[theorem]{Definition}
\newtheorem{assumption}[theorem]{Assumption}
\newtheorem{remark}[theorem]{Remark}
\begin{document}

%opening
  \title[Multidimensional SDEs with distributional drift]{Multidimensional stochastic differential equations  with distributional drift}
\author{Franco Flandoli$^1$}
\address{$^1$Dipartimento Matematica, Largo Bruno Pontecorvo 5, C.A.P. 56127, Pisa, Italia \texttt{flandoli@dma.unipi.it}}
\author{Elena Issoglio$^2$}
\address{$^2$Department of Mathematics, University of Leeds, Leeds, LS2 9JT, UK \texttt{E.Issoglio@leeds.ac.uk}}
\author{Francesco Russo$^3$}
\address{$^3$Unit\'e de Math\'ematiques appliqu\'ees, ENSTA ParisTech,
Universit\'e Paris-Saclay,
828, boulevard des Mar\'echaux, F-91120 Palaiseau, France \texttt{francesco.russo@ensta-paristech.fr}}
\date{\today}

\begin{abstract}
This paper investigates a time-dependent multidimensional stochastic
differential equation with drift being a distribution
in a suitable class of Sobolev spaces with negative derivation order.
This is done through a careful analysis of the corresponding
Kolmogorov equation whose coefficient is a distribution.
\end{abstract}

\maketitle

%\tableofcontents
{\bf Key words and phrases:} Stochastic differential equations;
distributional drift; Kolmogorov equation.

\bigskip
{\it AMS-classification}: 60H10; 35K10; 60H30; 35B65.
%60H10; 35D05; 35D10; 35K10.

\section{Introduction}

Let us consider a distribution valued function
$b:[0,T] \rightarrow {\mathcal S}'(\mathbb R^d)$,
where ${\mathcal S}'(\mathbb R^d)$ is the space
of tempered distributions.
An ordinary differential equation
of the type
\begin{equation} \label{EFirstOrder}
\mathrm dX_t = b(t,X_t) \mathrm dt,\quad X_0 = x_0,
\end{equation}
$x_0 \in \mathbb R^d$, $t\in [0,T]$,
does not make sense, except if we consider
it in a very general context of {\it generalized functions}.
Even if $b$ is function valued, without a minimum
regularity in space, problem \eqref{EFirstOrder},
is generally not well-posed.
A motivation for studying \eqref{EFirstOrder}
is for instance to consider $b$ as a quenched  realization
of some (not necessarily Gaussian) random field.
In the annealed form, \eqref{EFirstOrder}  is a singular {\it passive tracer}
 type equation.
 
Let us consider now   equation \eqref{EFirstOrder} with a  noise perturbation,
which is expected to have a regularizing effect, i.e.,
\begin{equation} \label{SDE}
\mathrm dX_t = b(t,X_t) \mathrm dt + \mathrm dW_t,\quad X_0 = x_0,
\end{equation}
for $t\in [0,T]$, where $W$ is a standard $d$-dimensional Brownian motion.
Formally speaking, the Kolmogorov equation associated
with the stochastic differential equation \eqref{SDE} is
\begin{equation} \label{ESecondOrder}
\left \{
\begin{array}{lr}
\partial_t u =b\cdot \nabla u + \frac{1}{2} \Delta u \qquad&  {\rm on} \
[0,T] \times \mathbb R^d,  \\
u(T,\cdot) = f  &{\rm on} \ \mathbb R^d,
\end{array}
\right.
\end{equation}
for suitable final conditions $f$. Equation \eqref{ESecondOrder} was studied
in the one-di\-men\-sio\-nal setting for instance
by \cite{russo_trutnau07} for any time independent $b$ which is the derivative in the distributional sense of
a continuous function
and in the multidimensional setting
by  \cite{i1},
for a class of $b$ of gradient type belonging
to a given  Sobolev space with negative derivation order.
The equation in \cite{i1} involves the \textit{pointwise product} of  distributions 
which in the literature is defined by means of paraproducts.

The point of view of the present paper is to
keep the same interpretation of the product as in \cite{i1} and
 to exploit the solution of a PDE of the same nature
as \eqref{ESecondOrder} in order to give sense and
study solutions of \eqref{SDE}.
A solution $X$ of \eqref{SDE}
is often identified as a {\it diffusion with distributional
drift}.\\
Of course the sense of equation \eqref{SDE} has to be made precise.
 The type of solution we consider will be called {\it virtual
solution}, see Definition  \ref{def: virtual solution}.
That solution will fulfill in particular  the property to be
 the limit in law,
when $n \rightarrow \infty$, of solutions
to classical stochastic differential equations
\begin{equation}  \label{SDEn}
\mathrm dX^n_t = \mathrm dW_t +  b_n(t,X^n_t) \mathrm dt, \quad t\in[0,T],
\end{equation}
where $b_n = b \star \phi_n$ and $(\phi_n)$
is a sequence of mollifiers converging
to the Dirac measure.

Diffusions in the generalized sense were studied by
several authors beginning with, at least in our
knowledge  \cite{portenko}; later on, many
authors considered special cases of stochastic differential equations with generalized coefficients, it is
difficult to quote  them all: in particular, we refer  to the case when $b$ is a
measure, \cite{blei, esd, o, trutnau2}.
\cite{blei} has even considered the case when  $b$ is a not necessarily
locally finite signed measure and the process is a 
possibly exploding semimartingale.
In all these
cases solutions were semimartingales.
In fact, \cite{ew} considered special cases of non-semimartingales
solving stochastic differential equations with generalized drift; those cases include examples
coming from Bessel processes.

The case of time independent SDEs in dimension one
of the type
\begin{equation}  \label{SDEone}
\mathrm dX_t = \sigma(X_t) \mathrm dW_t +  b(X_t)\mathrm dt,\quad t\in[0,T],
\end{equation}
where $\sigma$ is a strictly positive continuous
function and $b$ is the derivative of
a real continuous function
was solved and analyzed carefully in
 \cite{frw2} and \cite{frw1}, which  treated well-posedness
 of the martingale   problem,   It\^o's formula under weak conditions,
 semimartingale
 characterization and Lyons-Zheng
decomposition.
The only supplementary assumption was the
 existence of the function $\Sigma (x) = 2 \int_0^x \frac{b}{\sigma^2} \mathrm dy$
as limit of  appropriate regularizations.
 Also in \cite{basschen1} the authors were interested in
\eqref{SDE} and they provided a well-stated framework when $\sigma$
 and $b$ are $\gamma$-H\"older
 continuous,  $ \gamma > \frac{1}{2}$.
 In \cite{russo_trutnau07} the authors have also shown that in some cases strong solutions
(namely solutions adapted to the completed Brownian filtration) exist and pathwise uniqueness holds.\\
As far as the multidimensional case is concerned,
it seems that the first paper   was  \cite{basschen2}. Here the authors  have focused on \eqref{SDE}
in the case of a time independent drift $b$ which
is a measure of Kato class.

Coming back to the one-dimensional case,
the main idea of \cite{frw1} was the so called Zvonkin transform
 which allows to transform the candidate solution process
$X$ into a solution of a stochastic differential equation
with continuous non-degenerate coefficients without drift.
Recently \cite{karatzas} has considered other types of transforms
to study similar equations.
Indeed the transformation introduced  by Zvonkin in  \cite{z},
when the drift is a function, is also stated in the multidimensional
case. In a series of papers the first named author and coauthors
(see for instance \cite{priola}),
have efficiently made use of a (multidimensional) Zvonkin type transform
for the study of an SDE with measurable
not necessarily bounded drift, which however is still a function.
Zvonkin transform consisted there to transform
 a solution $X$ of \eqref{SDE} (which makes sense
being a classical SDE) through a solution
$\varphi:[0,T] \times \mathbb R^d \rightarrow \mathbb R^d$ of a PDE which
 is close
to the associated Kolmogorov equation \eqref{ESecondOrder} with some suitable
final condition. The resulting process $Y$ with  $Y_t = \varphi(t,X_t)$ for  $t\in [0,T]$
is a solution of an SDE for which one can show strong
 existence and pathwise uniqueness.

Here we have imported that method for the study
of our time-dependent multidimensional SDE
with distributional drift.

The paper is organized as follows. In Section \ref{sc: the komogorov PDE}
we adapt the techniques of \cite{i1}, based on pointwise products
for investigating existence and uniqueness
for a well chosen PDE of the same type as \eqref{ESecondOrder},
see \eqref{eq: backwards PDE}.
In Section \ref{SVirtual} we introduce the notion of {\it virtual solution} of \eqref{SDE}.
The construction will be based on the transformation  $X_t = \psi(t,Y_t)$ for  $t\in [0,T]$, where $Y$ is the solution of \eqref{eq: SDE for Yt} and
$\varphi(t,x) = x + u(t,x),
(t,x) \in [0,T] \times \mathbb R^d$, with $u$
being the solution of \eqref{eq: backwards PDE}.
Section \ref{SFinal}
shows that the virtual solution is indeed the limit
of classical solutions of regularized stochastic
differential equations.

\section{The Kolmogorov PDE}\label{sc: the komogorov PDE}

\subsection{Setting and preliminaries}
 Let $b$ be a vector field on $[0,T]\times \mathbb R^d, d\geq 1$, which is a distribution in space and weakly bounded in time, that is $b\in L^\infty ([0,T];\mathcal S'( \mathbb R^d ; \mathbb R^d))$. Let $\lambda>0$. We consider the parabolic PDE in $[0,T]\times \mathbb R^d$
\begin{align}\label{eq: backwards PDE}
\left\{
\begin{array}{lr}
 \partial_t u+ L^b u -(\lambda+1) u = -b \quad & \text{on } [0,T]\times\mathbb R^d, \\
 u(T)=0 & \text{on } \mathbb R^d,
\end{array}
\right.
\end{align}
where $L^b u = \frac12 \Delta u  +b \cdot \nabla u$ has to be interpreted componentwise, that is $(L^b u)_i = \frac12 \Delta u_i  +b \cdot \nabla u_i$ for $i=1,\ldots,d$. A continuous function $u:[0,T]\times \mathbb R^d\to \mathbb R^d$ will also be considered without any comment as $u:[0,T]\to C(\mathbb R^d; \mathbb R^d)$. In particular we will write $u(t,x)=u(t)(x)$ for all $(t,x)\in [0,T]\times \mathbb R^d$.
\begin{remark}
All the results we are going to prove remain valid for the equation%
\begin{align*}
\left\{
\begin{array}{lr}
\partial_{t}u+L^{b_{1}}u-\left(  \lambda+1\right)  u   =-b_{2}\quad  & \text{on
}\left[  0,T\right]  \times\mathbb{R}^{d},\\
u\left(  T\right)     =0 & \text{on } \mathbb R^d,
\end{array}
\right.
\end{align*}
 where $b_{1},b_{2}$ both satisfy the same assumptions as $b$. We restrict the discussion to the case $b_{1}=b_{2}=b$ to avoid notational confusion in the subsequent sections.
\end{remark}
Clearly we have to specify the meaning of the \emph{product} $b\cdot \nabla u_i$ as $b$ is a distribution. In particular, we are going to  make use in an essential way the notion of paraproduct, see \cite{r1}. We recall below a few elements of this theory;\ in particular, when we say that the pointwise product exists in $\mathcal{S}^{\prime}$ we mean that the limit
 \eqref{eq: paraproduct} exists in $\mathcal{S}^{\prime}$.
 For shortness we denote by $\mathcal S'$ and $\mathcal S$ the spaces $\mathcal{S}^{^{\prime}}( \mathbb R^d ; \mathbb R^d) $ and $\mathcal{S}( \mathbb R^d ; \mathbb R^d) $ respectively. Similarly for the $L^p$-spaces, $1\leq p\leq \infty$. We denote by $\langle\cdot, \cdot\rangle$ the dual pairing between an element of $\mathcal S'$ and an element of $\mathcal S$.

We now recall a definition of a pointwise product between a function and a distribution (see e.~g.~\cite{r1}) and some useful properties.\\
Suppose we are given $f\in \mathcal{S} ' (\mathbb{R}^d) $. Choose a function $\psi\in \mathcal{S}(\mathbb{R}^d)$ such that $0\leq \psi(x)\leq 1$ for every $x\in \mathbb{R}^d$, $\psi(x)=1$ if $\vert x\vert\leq 1$ and $\psi(x)=0$ if $\vert x\vert\geq \frac{3}{2}$. Then consider the following approximation $S^j f$ of $f$ for each $j\in \mathbb N$
\[
S^j f (x):= \left(\psi\left(\frac{\xi}{2^j}\right)\hat f\right)^\vee (x),
\]
that is in fact the convolution of $f$ against the smoothing rescaled
 function  $\psi_j$ associated with $\psi$.
This approximation is used  to define the product $fg$ of two distributions $f,g\in\mathcal S'$ as follows:
\begin{equation}\label{eq: paraproduct}
fg:=\lim _{j\to \infty} S^jfS^jg,
\end{equation}
if the limit exists in $\mathcal{S}'(\mathbb{R}^d)$. The convergence in the case we are interested in is part of the assertion below (see \cite{h1} appendix C.4, \cite{r1} Theorem 4.4.3/1).

\begin{definition}\label{def: mild solution in S'}
Let $b,u:\left[  0,T\right]  \rightarrow\mathcal{S}^{^{\prime}}$ be such that
\begin{itemize}
\item[(i)]the pointwise product $b\left(  t\right)  \cdot\nabla u\left(  t\right)  $
exists in $\mathcal{S}^{^{\prime}}$ for a.e. $t\in\left[  0,T\right],
$
\item[(ii)] there are $r\in\mathbb{R}$, $q\geq1$ such that $b,u,b\cdot\nabla u\in
L^{1}\left(  [0,T];H_{q}^{r}  \right)  $.
\end{itemize}
We say that $u$ is a mild solution of equation \eqref{eq: backwards PDE} in $\mathcal{S}^{^{\prime}}$ if, for every $\psi\in\mathcal{S}$ and $t\in\left[  0,T\right]  $, we have%
\begin{align} \label{weak-mild}%
\left\langle u\left(  t\right)  ,\psi\right\rangle & = \int_{t}^{T}\left\langle
b\left(  r\right)  \cdot\nabla u\left(  r\right)  ,P\left(  r-t\right)
\psi\right\rangle\mathrm dr\\ \nonumber
&+\int_{t}^{T}\left\langle b\left(  r\right)  -\lambda
u\left(  r\right)  ,P\left(  r-t\right)  \psi\right\rangle\mathrm dr.
\end{align}
\end{definition}

Here $\left(  P\left(  t\right)  \right)  _{t\geq0}$ denotes the heat
semigroup on $\mathcal{S}$ generated by $\frac{1}{2}\Delta-I$, defined for
each $\psi\in\mathcal{S}$ as%
\[
\left(  P\left(  t\right)  \psi\right)  \left(  x\right)  =\int_{\mathbb{R}%
^{d}}p_{t}\left(  x-y\right)  \psi\left(  y\right) \mathrm dy,
\]
where $p_t(x)$ is the heat kernel $p_t(x)= e^{-t} \frac{1}{(2t\pi )^{d/2}} \exp{\left(- \frac{|x|_d^2}{2t}\right)}$ and $\vert \cdot \vert_d$ 
is the usual Euclidean norm in $\mathbb R^d$.
The semigroup $\left( P(  t)\right)_{t\geq 0}  $ extends to $\mathcal{S}^{\prime}$, where
it is defined as
\[
\left(  P_{\mathcal{S}^{\prime}}\left(  t\right)  h\right)  \left(
\psi\right)  =  \langle h, \int_{\mathbb R^d} p_t(\cdot-y) \psi(y) \mathrm d y \rangle,
\]
for every $h\in\mathcal{S}^{\prime}$, $\psi\in\mathcal{S}$.

The fractional Sobolev spaces $H^r_q$ are the so called Bessel potential spaces and will be defined in the sequel.
\begin{remark}
If $b,u,b\cdot\nabla u$ a priori belong to spaces $L^{1}\left(  [0,T];H_{q_{i} }^{r_{i}} \right)  $ for different $r_{i}%
\in\mathbb{R}$, $q_{i}\geq1$, $i=1,2,3$, then (see e.g. \eqref{eq: embedding property}) there exist common
$r\in\mathbb{R}$, $q\geq1$ such that $b,u,b\cdot\nabla u\in L^{1}\left([0,T];H_{q}^{r} \right)$.
\end{remark}

The semigroup $\left(  P_{\mathcal{S}^{\prime}}\left(  t\right)  \right)
_{t\geq0}$ maps any $L^{p}\left(  \mathbb{R}^{d}\right)  $ into itself, for
any given $p\in\left(  1,\infty\right)  $;\ the restriction $\left(
P_{p}\left(  t\right)  \right)  _{t\geq0}$ to $L^{p}\left(  \mathbb{R}%
^{d}\right)  $ is a bounded analytic semigroup, with generator $-A_{p}$, where
$A_{p}=I-\frac{1}{2}\Delta$, see \cite[Thm.~1.4.1, 1.4.2]{d1}. The fractional powers of $A_{p}$ of order $s\in\mathbb{R}$ are then well-defined, see \cite{pazy83}. The fractional
Sobolev spaces $H^s_p(\mathbb R^d)$ of order $s\in \mathbb R$ are then $H^s_p(\mathbb R^d): =A_p^{- s/2}(L^p(\mathbb R^d))$ for all $s\in \mathbb R$ and they are Banach spaces when endowed with the norm $\|\cdot\|_{H^s_p}= \|A_p^{s/2} (\cdot)\|_{L^p}$.
The domain of $A_p^{s/2}$ is then the Sobolev space of order $s$, that is $D (A_p^{s/2})= H^s_p(\mathbb R^d)$, for all $s\in \mathbb R$. Furthermore, the negative powers $A_p^{-s/2}$ act as isomorphism from $H^{\gamma}_p(\mathbb R^d)$ onto $H^{\gamma+s}_p(\mathbb R^d)$ for $\gamma\in \mathbb R$.

We have defined so far function spaces and operators in the case of scalar
valued functions. The extension to vector valued functions must be understood
componentwise. For instance, the space $H_{p}^{s}\left(  \mathbb{R}%
^{d},\mathbb{R}^{d}\right)  $ is the set of all vector fields $u:\mathbb{R}%
^{d}\rightarrow\mathbb{R}^{d}$ such that $u^{i}\in H_{p}^{s}\left(
\mathbb{R}^{d}\right)  $ for each component $u^{i}$ of $u$; the vector field
$P_{p}\left(  t\right)  u:\mathbb{R}^{d}\rightarrow\mathbb{R}^{d}$ has
components $P_{p}\left(  t\right)  u^{i}$, and so on. Since we use vector
fields more often than scalar functions, we shorten some of the notations: we
shall write $H_{p}^{s}$ for $H_{p}^{s}\left(  \mathbb{R}^{d},\mathbb{R}%
^{d}\right)  $. Finally, we denote by $H_{p,q}^{-\beta}$ the space $H_{p}^{-\beta}\cap H_{q}^{-\beta}$ with the usual norm.

\begin{lemma}\label{lm: pointwise product}
 Let $1<p,q<\infty$ and $0<\beta<\delta$ and assume that $q>p \vee\frac{d}{\delta}$. Then for every $f\in H^{\delta}_p(\mathbb{R}^d)$ and $g\in H^{-\beta}_q(\mathbb{R}^d)$ we have  $fg\in H_p^{-\beta}(\mathbb{R}^d) $ and there exists a positive constant $c$ such that
\begin{equation}\label{eq: pointwise product}
 \| fg \|_{ H_p^{-\beta}(\mathbb{R}^d)}\leq c \| f\|_{H_p^{\delta}(\mathbb{R}^d)}\cdot \| g\|_{ H_q^{-\beta}(\mathbb{R}^d)}.
\end{equation}
\end{lemma}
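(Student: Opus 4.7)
My plan is to decompose the formal product by Bony's paraproduct formula,
\begin{equation*}
fg = T_f g + T_g f + R(f,g),
\end{equation*}
where $T_f g = \sum_j S_{j-1} f \, \Delta_j g$, $T_g f$ is defined symmetrically, and $R(f,g) = \sum_{|j-k|\leq 1} \Delta_j f \, \Delta_k g$ is the resonant remainder, and then to estimate the three pieces separately. The two parts of the hypothesis $q > p \vee d/\delta$ are exploited for complementary purposes: $q>p$ makes room for an application of H\"older's inequality with exponents summing to $1/p$, while $q > d/\delta$ is precisely the Sobolev threshold that puts $f$ into the right Lebesgue space.

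For $T_f g$ each block $S_{j-1}f \cdot \Delta_j g$ has Fourier support in an annulus of scale $2^j$, so the Littlewood--Paley characterisation of $H^{-\beta}_p$ reduces the estimate to the $L^p$ norm of a weighted square function. H\"older's inequality in the space variable with $\tfrac1p = \tfrac1r + \tfrac1q$, i.e.\ $r = pq/(q-p)$ (finite because $q>p$), pulls out $\|f\|_{L^r}$ and leaves a square function of $g$ dominated by $\|g\|_{H^{-\beta}_q}$. The Sobolev embedding $H^\delta_p \hookrightarrow L^r$, equivalent to $q \geq d/\delta$ via the identity $\tfrac1r = \tfrac1p - \tfrac1q \geq \tfrac1p - \tfrac\delta d$, then gives $\|f\|_{L^r} \leq C\|f\|_{H^\delta_p}$ and finishes this piece.

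For the symmetric term $T_g f$ the frequency roles are exchanged. I would bound the low-frequency factor by $\|S_{j-1} g\|_{L^q} \leq C\, 2^{j\beta} \|g\|_{H^{-\beta}_q}$, the loss $2^{j\beta}$ being absorbed by the gain $2^{-j\delta}$ coming from $f \in H^\delta_p$ on the high-frequency block $\Delta_j f$; convergence of the dyadic sum uses exactly $\delta - \beta > 0$, and the same Sobolev embedding as above converts the resulting $L^r$ control of $f$ into an $H^\delta_p$ norm. The resonant term $R(f,g)$ is similar in spirit but genuinely more delicate: each block $\Delta_j f \, \Delta_k g$ has Fourier support in a ball (not an annulus) of scale $2^j$, so the annular cancellation of Littlewood--Paley is lost; one instead controls $R(f,g)$ in $H^{\delta - \beta}_p$ by the same dyadic summation, and the continuous embedding $H^{\delta - \beta}_p \hookrightarrow H^{-\beta}_p$ returns us to the target space.

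The main obstacle I expect is the resonant term, where multiplying two high-frequency components can produce output at arbitrarily low frequencies; taming this needs the full strength of the gap $\delta > \beta$ together with the matched Lebesgue exponent $p$ on both sides of the estimate. Assembling the three bounds delivers $\|fg\|_{H^{-\beta}_p} \leq c \, \|f\|_{H^\delta_p} \|g\|_{H^{-\beta}_q}$ with $c$ depending only on $d,p,q,\beta,\delta$; as a by-product, the partial sums $S^j f \cdot S^j g$ form a Cauchy sequence in $H^{-\beta}_p$, so the pointwise product in the sense of \eqref{eq: paraproduct} exists and indeed lies in $H^{-\beta}_p$.
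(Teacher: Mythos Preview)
The paper does not supply a proof of this lemma; it is stated with a citation to \cite{h1}, Appendix~C.4, and \cite{r1}, Theorem~4.4.3/1, and then used as a black box throughout. Your paraproduct approach is exactly the machinery behind those cited results, so in spirit you are reproducing the proof the authors defer to the literature rather than doing something different from them.

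A couple of minor comments on the outline itself. For the piece $T_g f$ your bound $\|S_{j-1}g\|_{L^q}\le C\,2^{j\beta}\|g\|_{H^{-\beta}_q}$ is most naturally justified via the embedding $H^{-\beta}_q = F^{-\beta}_{q,2}\hookrightarrow B^{-\beta}_{q,\infty}$ together with a geometric sum; you might want to say this explicitly. For both $T_g f$ and the resonant term you write that a Sobolev embedding ``converts the resulting $L^r$ control of $f$ into an $H^\delta_p$ norm'', but what you actually need there is dyadic: one uses $\|\Delta_j f\|_{L^r}\le C\,2^{j(d/p-d/r)}\|\Delta_j f\|_{L^p}$ (Bernstein) and then $d/p-d/r=d/q<\delta$, so the net exponent $\beta+d/q-\delta$ is negative and the $\ell^2$ (or $\ell^1$) sum in $j$ closes. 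The way you phrased it suggests a single global embedding on $f$, which would not quite give the square-function structure you need for the Triebel--Lizorkin norm. None of this is a genuine gap---the argument goes through once the bookkeeping is done carefully---but if you intend to write out a full proof, those are the places to be precise.
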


For the following the reader can also consult  \cite[Section 2.7.1]{triebel78}. Let us consider the spaces $C^{0,0}(\mathbb R^d; \mathbb R^d)$ and $C^{1,0}(\mathbb R^d; \mathbb R^d)$ defined as the closure of  $\mathcal S$ with respect to the norm  $\|f\|_{C^{0,0}} = \|f\|_{L^\infty}$ and $\|f\|_{C^{1,0}} = \|f\|_{L^\infty} + \|\nabla f\|_{L^\infty}$, respectively.
For  $0 < \alpha < 1$ we will consider the Banach spaces
\begin{align*}
&C^{0, \alpha}= \{ f \in C^{0,0}(\mathbb R^d; \mathbb R^d) :  \|f\|_{C^{0,\alpha}} <\infty \},\\
&C^{1, \alpha} =\{  f \in C^{1,0}(\mathbb R^d; \mathbb R^d) :  \|f\|_{C^{1,\alpha}} <\infty \},
\end{align*}
endowed with the norms
\begin{align*}
&\|f\|_{C^{0,\alpha}} := \| f \|_{L^\infty}  + \sup_{x\neq y \in \mathbb R^d}  \frac{|f(x)-f(y)|}{|x-y|^\alpha} \\
& \|f\|_{C^{1,\alpha}} := \| f \|_{L^\infty}  + \| \nabla f \|_{L^\infty} + \sup_{x\neq y \in \mathbb R^d}  \frac{|\nabla f(x)-\nabla f(y)|}{|x-y|^\alpha},
\end{align*}
respectively.

From    now on,  we are going to make the following \textbf{standing assumption} on the drift $b$ and on the possible choice of parameters:
\begin{assumption}\label{assumption}
 Let $\beta\in\left(  0,\frac{1}{2}\right)  $, $q\in\left(  \frac{d}{1-\beta},\frac{d}{\beta}\right)  $ and set $\tilde q:= \frac{ d}{1-\beta}$. The drift $b$ will always be of the type
\[
b\in L^\infty\left ([0,T]; H^{-\beta}_{\tilde q,q} \right).
\]
\end{assumption}
\begin{remark}\label{rem: b in L_p}
The fact that $b\in L^\infty\left ([0,T]; H^{-\beta}_{\tilde q, q}\right)$ implies,
 for each  $p\in [\tilde q, q]$, that $b\in L^\infty\left ([0,T]; H^{-\beta}_{p} 
\right)$.
\end{remark}

\begin{figure}
\begin{tikzpicture}
\draw[->] (-1,0)--(7,0) node[below right] {$\frac1p$};
\draw [->] (0.5,-1)--(0.5,3) node[left] {$\delta$};
\fill[gray] (2, .9)--(2,2.3)--(4.33,2.3)--(2, .9);
\draw (-.5,.3)  node[anchor=east]{$\beta$}--(5.5,.3);
\draw (-.5,2.3) node[anchor=east]{$1-\beta$} --(5.5,2.3);
\draw (2,-.7) node[anchor=south west]{$ \frac1q$} --(2,2.8);
\draw[dashed] (1,-.7) node[anchor=south west]{$ \frac{ \beta}{ d}$} --(1,2.8);
\draw[dashed] (4.33,-.7) node[anchor=south west]{$\frac{1-\beta}{ d}=:\frac{1}{\tilde q}$} --(4.33,2.8);
\draw (.5,0)--(5,2.7);  % I am drawing y = 3/5 x - 3/10
\end{tikzpicture}
\caption{The set $K(\beta, q)$.}\label{fig: the set K}
\end{figure}

Moreover we consider the  set
\begin{equation}\label{eq: set admissible kappa}
K(\beta,q):=\left\{\kappa=(\delta, p): \; \beta< \delta< 1-\beta, \,  \frac d\delta < p < q  \right\},
\end{equation}
which is drawn in  Figure \ref{fig: the set K}. Note that $K(\beta,q)$ is nonempty since $\beta<\frac12 $ and $ \frac d{1-\beta}< q < \frac d\beta $.

\begin{remark}\label{rem: pointwise product}
As a consequence of Lemma \ref{lm: pointwise product}, for $0<\beta<\delta$ and  $q>p\vee \frac{d}{\delta}$  and if
  $ b\in L^\infty([0,T]; H^{-\beta}_q)$ and  $u \in C^{0}([0,T]; H^{1+\delta}_p)$,  then for all $t\in [0,T]$ we have $b(t)\cdot\nabla u(t) \in  H^{-\beta}_p $ and
\[
\|b(t)\cdot\nabla u(t) \|_{ H^{-\beta}_p} \leq c \|b\|_{\infty, H^{-\beta}_q } \|u(t) \|_{ H^{\delta}_p },
\]
having used the continuity of $\nabla$ from $ H^{1+\delta}_p $ to $ H^{\delta}_p$. Moreover any choice $(\delta,p)\in K(\beta, q)$ satisfies the hypothesis in Lemma  \ref{lm: pointwise product}.
\end{remark}

\begin{definition}\label{def: mild sol in H}
Let $(\delta, p)\in K(\beta,q) $. We say that $u\in C\left(  \left[  0,T\right]
;H_{p}^{1+\delta}  \right)  $ is a mild solution of equation \eqref{eq: backwards PDE} in $H_{p}^{1+\delta}$ if%
\begin{equation}\label{mild in Sobolev}
u\left(  t\right)  =\int_{t}^{T}P_{p}\left(  r-t\right)  b\left(  r\right)
\cdot\nabla u\left(  r\right) \mathrm dr+\int_{t}^{T}P_{p}\left(  r-t\right)  \left(
b\left(  r\right)  -\lambda u\left(  r\right)  \right)\mathrm dr,
\end{equation}
for every $t\in\left[  0,T\right]  $.
\end{definition}

\begin{remark}\label{remark 2}
Notice that $b\cdot\nabla u\in L^{\infty}\left(  \left[ 0,T\right]  ;H_{p}^{-\beta}\right)$   by Remark \ref{rem: pointwise product}. By Remark \ref{rem: b in L_p}, $b\in L^{\infty}\left(  \left[  0,T\right]  ;H_{p}^{-\beta} \right) $. Moreover $\lambda u\in L^{\infty}\left(  \left[  0,T\right]  ;H_{p}^{-\beta} \right)  $ by the embedding $H_p^{1+\delta}  \subset H_p^{-\beta}$. Therefore the integrals in Definition \ref{def: mild sol in H} are meaningful in $H_{p}^{-\beta}$.
\end{remark}

Note that setting $v(t,x):= u(T-t,x)$, the PDE \eqref{eq: backwards PDE} can be equivalently rewritten  as
\begin{align}\label{eq: fwd PDE}
\left\{
\begin{array}{lr}
 \partial_tv =L^b v -(\lambda+1) v+ b \quad & \text{on } [0,T]\times\mathbb R^d, \\
 v(0)=0 & \text{on } \mathbb R^d \, .
\end{array}
\right.
\end{align}
The notion of mild solutions of equation \eqref{eq: fwd PDE} in $\mathcal S'$ and in $ H_{p}^{1+\delta} $ are analogous to Definition \ref{def: mild solution in S'} and  Definition \ref{def: mild sol in H}, respectively. In particular the mild solution in $ H_{p}^{1+\delta}$ verifies
\begin{equation}\label{eq: mild solution to fwd PDE}
 v(t)=\int_0^t P_p(t-r)  \left(b(r)\cdot\nabla v(r) \right)\mathrm dr  +\int_0^t P_p(t-r) (b(r)-\lambda v(r) ) \mathrm dr.
\end{equation}
Clearly the regularity properties of $u$ and $v$ are the same.\\
For a Banach space $X$ we denote the usual norm in $L^\infty([0,T]; X) $ by $\|f\|_{\infty, X}$ for $f\in L^\infty([0,T]; X)$. Moreover, on the Banach space $ C ([0,T]; X)$ with norm $\|f\|_{\infty, X}:=\sup_{0\leq t\leq T}\|f(t)\|_X$ for $f\in  C ([0,T];X)$,  we introduce a family of equivalent norms $\{\left\|\cdot\right\|^{(\rho)}_{\infty, X} ,\ \rho \geq 1\} $ as follows:
\begin{equation}\label{eq: equivalent norm}
\|f\|^{(\rho)}_{\infty, X}:= \sup_{0\leq t\leq T}\mathrm e^{-\rho t} \|f(t)\|_X.
\end{equation}

Next we state a mapping property of the heat semigroup $P_p(t)$ on $L^p(\mathbb R^d)$: it maps distributions of fractional order $-\beta$ into functions of fractional order $1+\delta$ and the price one has to pay is a singularity in time. The proof is analogous to the one in \cite[Prop.~3.2]{i1} and is based on the analyticity of the semigroup.
\begin{lemma}\label{lm: semigroup from -beta to 1+delta}
Let $0<\beta<\delta$, $\delta+\beta<1$ and $w\in H^{-\beta}_p(\mathbb R^d)$. Then $P_p(t) w\in  H_p^{1+\delta}(\mathbb R^d)$ for any $t>0$ and moreover there exists a positive constant $c$ such that
\begin{equation} \label{eq: semigroup from -beta to 1+delta}
\left\|P_p(t) w\right\|_{H_p^{1+\delta}(\mathbb R^d)}\leq c\left\|w\right\|_{H_p^{-\beta}(\mathbb R^d)} t^{-\frac{1+\delta+\beta}{2}}.
\end{equation}
\end{lemma}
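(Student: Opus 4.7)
The strategy is to exploit analyticity of the semigroup $P_p$ in the form of the standard estimate $\|A_p^{\alpha}P_p(t)\|_{L^p\to L^p}\le C\,t^{-\alpha}$ valid for every $\alpha\ge 0$ and $t>0$ (see \cite[Thm.~2.6.13]{pazy83}), and to translate it into the scale of Bessel potential spaces via the definition $\|\cdot\|_{H^s_p}=\|A_p^{s/2}\,\cdot\,\|_{L^p}$ recalled just before the lemma.

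First I would unravel the left-hand side: by definition,
\begin{equation*}
\|P_p(t)w\|_{H^{1+\delta}_p}=\|A_p^{(1+\delta)/2}P_p(t)w\|_{L^p}.
\end{equation*}
Next, since $A_p^{-\beta/2}$ is an isomorphism from $H^{-\beta}_p$ onto $L^p$, the function $f:=A_p^{-\beta/2}w$ lies in $L^p$ with $\|f\|_{L^p}$ equivalent to $\|w\|_{H^{-\beta}_p}$, and $w=A_p^{\beta/2}f$. Using that fractional powers of the sectorial operator $A_p$ satisfy $A_p^{r}A_p^{s}=A_p^{r+s}$ and commute with $P_p(t)$ on suitable dense domains, one obtains
\begin{equation*}
A_p^{(1+\delta)/2}P_p(t)w=A_p^{(1+\delta+\beta)/2}P_p(t)f.
\end{equation*}
Setting $\alpha:=(1+\delta+\beta)/2$ and applying the analytic semigroup bound gives
\begin{equation*}
\|A_p^{\alpha}P_p(t)f\|_{L^p}\le C\,t^{-\alpha}\|f\|_{L^p}\le C\,t^{-(1+\delta+\beta)/2}\|w\|_{H^{-\beta}_p},
\end{equation*}
which is exactly \eqref{eq: semigroup from -beta to 1+delta}. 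The vector-valued case then follows by applying the scalar bound componentwise, consistently with the notational convention spelled out after \cite[Thm.~1.4.2]{d1}.

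The delicate point I expect to dwell on is the commutation identity between $A_p^{\sigma}$ and $P_p(t)$ when $w$ is not a priori in $L^p$: one has to check that $P_p(t)w$, which a priori lives only in $\mathcal S'$ via the extension $P_{\mathcal S'}(t)$ defined in the excerpt, actually belongs to the domain of every positive fractional power of $A_p$, and that the chain of identifications above is rigorous. This is precisely the content of the argument used in \cite[Prop.~3.2]{i1}, and works by writing $P_p(t)=P_p(t/2)P_p(t/2)$, noting that $P_p(t/2)$ already maps $\mathcal S'$-distributions of fixed negative order into $\bigcap_{r\in\mathbb R}H^r_p$ by analyticity, and then invoking commutation of fractional powers on that intersection (which is dense and stable under $P_p(t)$). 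Observe that the hypothesis $\delta+\beta<1$ plays no role in the pointwise estimate itself; it only becomes relevant when \eqref{eq: semigroup from -beta to 1+delta} is integrated in time, since it guarantees $\int_0^T t^{-(1+\delta+\beta)/2}\,\mathrm dt<\infty$ and hence the well-posedness of the mild formulation \eqref{mild in Sobolev}.
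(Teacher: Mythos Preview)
Your proof is correct and follows exactly the approach the paper indicates: the authors do not spell out a proof but state that it ``is analogous to the one in \cite[Prop.~3.2]{i1} and is based on the analyticity of the semigroup,'' which is precisely the mechanism you use (rewriting the $H^{1+\delta}_p$-norm via $A_p^{(1+\delta+\beta)/2}$ and invoking $\|A_p^\alpha P_p(t)\|_{L^p\to L^p}\le C t^{-\alpha}$). Your remark that the constraint $\delta+\beta<1$ is not needed for the pointwise estimate but only for integrability in time is also correct and worth noting.
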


\begin{proposition}\label{pr: holder continuity with b bounded}
Let $f\in L^{\infty}\left( [ 0,T]; H^{-\beta}_p \right)  $ and $g:\left[  0,T\right]
\rightarrow H^{-\beta}_p$ for $\beta\in\mathbb R$ defined as%
\[
g\left(  t\right)  =\int_{0}^{t} P_p(t-s) f\left(  s\right) \mathrm d s.
\]
Then $g\in C^{\gamma}\left(  \left[  0,T\right]  ;   H^{2-2\epsilon-\beta }_p \right)  $ for every $\epsilon>0$ and $\gamma\in\left(
0,\epsilon\right)  $.
\end{proposition}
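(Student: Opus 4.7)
The plan is to control $\|g(t) - g(s)\|_{H^{2-2\epsilon-\beta}_p}$ for $0 \leq s < t \leq T$ by the standard analytic-semigroup decomposition
\begin{equation*}
g(t) - g(s) = \int_s^t P_p(t-r) f(r) \, \mathrm dr + \int_0^s \bigl(P_p(t-r) - P_p(s-r)\bigr) f(r) \, \mathrm dr,
\end{equation*}
and to estimate each piece separately to order $(t-s)^\epsilon$.

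For the first piece, I would invoke the smoothing estimate of Lemma \ref{lm: semigroup from -beta to 1+delta} (or rather its immediate generalization, proved identically using $H^s_p = A_p^{-s/2}(L^p)$ together with the analytic bound $\|A_p^\alpha P_p(\tau)\|_{L^p\to L^p}\leq C \tau^{-\alpha}$): for any $\sigma\in(0,2)$ and $w\in H^{-\beta}_p$,
\begin{equation*}
\|P_p(\tau)\, w\|_{H^{\sigma-\beta}_p} \leq C \tau^{-\sigma/2} \|w\|_{H^{-\beta}_p}.
\end{equation*}
Specializing to $\sigma = 2-2\epsilon$ gives $\|P_p(t-r)f(r)\|_{H^{2-2\epsilon-\beta}_p} \leq C (t-r)^{-(1-\epsilon)} \|f\|_{\infty, H^{-\beta}_p}$, whose integral over $r\in[s,t]$ is bounded by $C\epsilon^{-1}(t-s)^\epsilon \|f\|_{\infty, H^{-\beta}_p}$.

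For the second piece, I would use the identity $\frac{\mathrm d}{\mathrm du} P_p(u) = -A_p P_p(u)$ to write $P_p(t-r)-P_p(s-r) = -\int_{s-r}^{t-r} A_p P_p(u)\,\mathrm du$. The same fractional-power bookkeeping produces $\|A_p P_p(u)f(r)\|_{H^{2-2\epsilon-\beta}_p} \leq C u^{-(2-\epsilon)}\|f\|_{\infty, H^{-\beta}_p}$, and an application of Fubini gives
\begin{equation*}
\int_0^s \int_{s-r}^{t-r} u^{-(2-\epsilon)}\, \mathrm du\, \mathrm dr = \frac{1}{\epsilon(1-\epsilon)} \bigl[s^\epsilon - t^\epsilon + (t-s)^\epsilon\bigr].
\end{equation*}
Subadditivity of $x\mapsto x^\epsilon$ on $[0,\infty)$ yields $t^\epsilon \leq s^\epsilon+(t-s)^\epsilon$, so the bracket is $\leq (t-s)^\epsilon$.

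Summing the two contributions produces $\|g(t)-g(s)\|_{H^{2-2\epsilon-\beta}_p} \leq C(\epsilon,T)(t-s)^\epsilon \|f\|_{\infty,H^{-\beta}_p}$; the exponent obtained is actually $\epsilon$, and dropping to any $\gamma\in(0,\epsilon)$ comfortably absorbs the blow-up of the constants as $\epsilon\to0^+$. The only delicate point of the argument is the bookkeeping: the singularities $(t-r)^{-(1-\epsilon)}$ and $u^{-(2-\epsilon)}$ must remain integrable in their respective variables, which is precisely what forces the loss of $2\epsilon$ units of spatial regularity relative to the maximal two-derivative gain intrinsic to the heat semigroup.
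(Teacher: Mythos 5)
Your proof is correct, and in fact establishes a slightly stronger conclusion than stated (H\"older exponent exactly $\epsilon$, rather than only $\gamma<\epsilon$). The decomposition of $g(t)-g(s)$ and the treatment of the first integral via the smoothing estimate match the paper. Where you genuinely diverge is in the second integral: you write the semigroup difference as $P_p(t-r)-P_p(s-r)=-\int_{s-r}^{t-r}A_p P_p(u)\,\mathrm du$ and apply the analytic-semigroup bound $\|A_p^{2-\epsilon}P_p(u)\|_{L^p\to L^p}\le C u^{-(2-\epsilon)}$, then integrate in $r$; the paper instead inserts $A_p^{\gamma}P_p(r-s)$ and invokes the interpolation-type estimate $\|P_p(\tau)w-w\|_{L^p}\le C_\gamma\tau^\gamma\|w\|_{H^{2\gamma}_p}$ for $w\in D(A_p^\gamma)$, which produces the factor $(t-r)^\gamma r^{\epsilon-\gamma}$ and therefore requires the integrability condition $\gamma<\epsilon$. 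Your route is a bit more elementary (only the smoothing bound is used, not the $(P_p(\tau)-I)$ estimate) and avoids introducing a second parameter $\gamma$ into the splitting, which is precisely why the endpoint exponent $\epsilon$ survives. Both techniques are standard; the tradeoff is cosmetic since the proposition only asserts $\gamma\in(0,\epsilon)$, but your version is tidier.
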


\begin{proof}
First observe that for $f\in D(A_p^\gamma)$  there exists $C_\gamma>0$ such that
\begin{equation}\label{eq: interp}
\|P_p(t) f -f\|_{L^p} \leq C_\gamma t^\gamma \|f\|_{H^{2\gamma}_p},
\end{equation}
for all $t\in\left[0,T\right]  $ (see \cite[Thm 6.13, (d)]{pazy83}).

Let $0\leq r<t\leq T$. We have
\begin{align*}
 g(t)-g(r) =& \int_{0}^{t} P_p(t-s) f\left(  s\right) \mathrm d s - \int_{0}^{r} P_p(r-s) f\left(  s\right) \mathrm d s \\
 =& \int_{r}^{t} P_p(t-s) f\left(  s\right) \mathrm d s +\int_{0}^{r} \left(P_p(t-s)-P_p(r-s)\right) f\left(  s\right) \mathrm d s\\
 =& \int_{r}^{t} P_p(t-s) f\left(  s\right) \mathrm d s \\
 &+\int_{0}^{r} A_p^\gamma P_p(r-s) \left(A_p^{-\gamma} P_p(t-r) f(s) - A_p^{-\gamma}  f\left(  s\right) \right) \mathrm d s,
\end{align*}
so that
\begin{align*}
\| &g(t)- g(r) \|_{H_p^{2-2\epsilon -\beta}} \\
\leq& \int_{r}^{t}  \| P_p(t-s) f( s) \|_{H^{2-2\epsilon-\beta }_p} \mathrm d s \\
&+\int_{0}^{r} \| A_p^\gamma P_p(r-s) \left(A_p^{-\gamma} P_p(t-r) f(s) - A_p^{-\gamma}  f\left(  s\right) \right)\|_{H^{2-2\epsilon-\beta }_p} \mathrm d s\\
\leq& \int_{r}^{t}  \|A_p^{1-\epsilon-\beta/2} P_p(t-s) f( s) \|_{L^p} \mathrm d s \\
&+\int_{0}^{r} \| A_p^{1-\epsilon-\beta/2+\gamma} P_p(r-s) \left(A_p^{-\gamma} P_p(t-r) f(s) - A_p^{-\gamma} f(s) \right)\|_{L^p} \mathrm d s\\
=&:\text{(S1)} + \text{(S2)}.
\end{align*}
Let us consider (S1) first. We have
\begin{align*}
 \text{(S1)}\leq & \int_{r}^{t}  \|A_p^{1-\epsilon} P_p(t-s)\|_{L^p\to L^p} \| A^{-\beta/2} f( s) \|_{L^p} \mathrm d s \\
 \leq & \int_{r}^{t} C_\epsilon   (t-s)^{-1+\epsilon}  \| f( s) \|_{ H^{-\beta}_p} \mathrm d s \\
 \leq & C_\epsilon   (t-r)^{\epsilon}  \|f\|_{\infty, H^{-\beta}_p},
\end{align*}
having used \cite[Thm 6.13, (c)]{pazy83}. Using again the same result, the term (S2), together with \eqref{eq: interp}, gives (with the constant $C$ changing from line to line)
\begin{align*}
 \text{(S2)}&=\int_{0}^{r} \left\| A_p^{1-\epsilon+\gamma} P_p(r-s) \left( P_p(t-r) A_p^{-\gamma-\beta/2} f(s) - A_p^{-\gamma-\beta/2} f(s) \right)\right\|_{L^p} \mathrm d s\\
 &\leq C \int_{0}^{r}  (r-s)^{-1+\epsilon-\gamma} \left\| P_p(t-r) A_p^{-\gamma-\beta/2} f(s) - A_p^{-\gamma-\beta/2} f(s) \right\|_{L^p} \mathrm d s\\
 &\leq C \int_{0}^{r}  (r-s)^{-1+\epsilon-\gamma}  (t-r)^{\gamma}  \| A_p^{-\gamma -\beta/2} f(s) \|_{H^{2\gamma}_p} \mathrm d s\\
 &\leq C  (t-r)^{\gamma}  \int_{0}^{r}  (r-s)^{-1+\epsilon-\gamma}  \| f(s) \|_{H^{-\beta}_p} \mathrm d s\\
 &\leq C  (t-r)^{\gamma}  \int_{0}^{r}  (r-s)^{-1+\epsilon-\gamma}  \| f\|_{\infty, H^{-\beta}_p} \mathrm d s\\
 &\leq C  (t-r)^{\gamma}   r^{\epsilon-\gamma}  \| f\|_{\infty, H^{-\beta}_p}.
\end{align*}
Therefore we have $g\in C^{\gamma}\left(  \left[  0,T\right]  ;   H^{2-2\epsilon-\beta }_p \right)  $ for each $0<\gamma < \epsilon$ and the proof is complete.
\end{proof}

The following lemma gives integral bounds which will be used later. The proof makes use of the Gamma and the Beta functions together with some basic integral estimates. We recall the definition of the Gamma function:
\[
\Gamma(a)=\int_0^\infty e^{-t} t^{a-1}\mathrm d t,
\]
and the integral converges for any $a\in \mathbb C$ such that $\operatorname{Re}(a)>0$.
\begin{lemma}\label{lm: stima integrale con fz gamma}
If $0\leq s<t\leq T<\infty$ and $0\leq \theta<1$ then for any $\rho\geq 1$ it holds
\begin{equation}\label{eq 1 stima integrale con fz gamma}
 \int_s^t e^{-\rho r} r^{-\theta} \mathrm{d} r\leq\Gamma(1-\theta) \rho^{\theta-1}.
\end{equation}
Moreover if $\gamma>0$ is such that $ \theta+\gamma<1$ then for any $ \rho \geq 1$ there exists a positive constant $C$ such that
\begin{equation}\label{eq 2 stima integrale con fz gamma}
\int_0^{t} e^{-\rho(t- r)} (t-r)^{-\theta} r^{-\gamma} \mathrm{d} r\leq C \rho^{\theta -1+\gamma}.
\end{equation}
\end{lemma}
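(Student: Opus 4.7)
The plan is to handle \eqref{eq 1 stima integrale con fz gamma} by a direct change of variables. Since the integrand is nonnegative, I can enlarge the integration domain to $[0,\infty)$ and then substitute $u=\rho r$. This turns the integral into $\rho^{\theta-1}\int_0^\infty e^{-u}u^{-\theta}\,\mathrm d u=\rho^{\theta-1}\Gamma(1-\theta)$, with convergence at $0$ ensured by $\theta<1$.

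For \eqref{eq 2 stima integrale con fz gamma} I would first do the substitution $s=t-r$, rewriting the integral as $I:=\int_0^t e^{-\rho s} s^{-\theta}(t-s)^{-\gamma}\,\mathrm d s$. The key algebraic remark is that $\rho^{\theta-1+\gamma}=\rho^{-(1-\theta-\gamma)}$ with $1-\theta-\gamma>0$, which suggests arguing by cases according to the size of $\rho t$. When $\rho t\leq 1$, I bound $e^{-\rho s}\leq 1$ and use the Beta integral
\[
I\leq \int_0^t s^{-\theta}(t-s)^{-\gamma}\,\mathrm d s = B(1-\theta,1-\gamma)\,t^{1-\theta-\gamma};
\]
the hypothesis $\rho t\leq 1$ forces $t^{1-\theta-\gamma}\leq \rho^{-(1-\theta-\gamma)}=\rho^{\theta-1+\gamma}$, which is exactly the claim.

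When $\rho t\geq 1$ I split $I$ at $s=t/2$. On $[0,t/2]$ I use $(t-s)^{-\gamma}\leq (t/2)^{-\gamma}$, enlarge to $[0,\infty)$ and apply \eqref{eq 1 stima integrale con fz gamma} to obtain a bound of order $t^{-\gamma}\rho^{\theta-1}\Gamma(1-\theta)$; since $\rho t\geq 1$, $t^{-\gamma}\leq \rho^\gamma$ and we recover the $\rho^{\theta-1+\gamma}$ scaling. On $[t/2,t]$ I use $s^{-\theta}\leq (t/2)^{-\theta}$ and $e^{-\rho s}\leq e^{-\rho t/2}$, integrate the remaining $(t-s)^{-\gamma}$ explicitly, and get a bound of order $(1-\gamma)^{-1}\,e^{-\rho t/2}(t/2)^{1-\theta-\gamma}$. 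Introducing $u=\rho t/2\geq 1/2$ gives $(t/2)^{1-\theta-\gamma}e^{-\rho t/2}=\rho^{\theta-1+\gamma}\,u^{1-\theta-\gamma}e^{-u}$, and the map $u\mapsto u^{1-\theta-\gamma}e^{-u}$ is bounded on $[1/2,\infty)$, which closes the estimate.

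The main obstacle is exactly the coupling between $t$ and $\rho$ in the second inequality: the crude Beta bound is sharp only for small $\rho t$, while the split-and-exponential-decay argument is sharp only for large $\rho t$, and one must check that the two regimes interlock so that the exponents in $t$ disappear in favor of the single power $\rho^{\theta-1+\gamma}$. Once the dichotomy $\rho t \lessgtr 1$ is in place, everything else reduces to the standard identities for $\Gamma$ and $B$.
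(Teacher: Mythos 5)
Your proof is correct. The paper itself omits the proof of this lemma, offering only the remark that it ``makes use of the Gamma and the Beta functions together with some basic integral estimates''; your argument is a clean realization of exactly that hint. Part \eqref{eq 1 stima integrale con fz gamma} follows from the substitution $u=\rho r$ after enlarging the domain, and for part \eqref{eq 2 stima integrale con fz gamma} your dichotomy on $\rho t$ is the right device: when $\rho t\leq 1$ the Beta-function bound $B(1-\theta,1-\gamma)\,t^{1-\theta-\gamma}$ is converted to the claimed power of $\rho$ using $1-\theta-\gamma>0$, and when $\rho t\geq 1$ the split at $s=t/2$ together with \eqref{eq 1 stima integrale con fz gamma} on the near-$0$ piece and the boundedness of $u\mapsto u^{1-\theta-\gamma}e^{-u}$ on $[1/2,\infty)$ on the far piece gives the same power, with a constant depending only on $\theta,\gamma$ and not on $t$, $T$ or $\rho$.
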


\begin{lemma}\label{lm: contraction property of integral operator}
 Let $1<p,q<\infty$ and $0<\beta<\delta$ with $q>p \vee\frac{d}{\delta}$ and let $\beta+\delta<1$. Then for $b\in L^\infty([0,T]; H^{-\beta}_{p,q})$ and $v\in C ([0,T]; H^{1+\delta}_p) $ we have
\begin{itemize}
 \item[(i)] $\int_0^\cdot P_{p}(\cdot-r) b(r) \mathrm dr\in C([0,T]; H^{1+\delta}_p)$;
 \item[(ii)]$\int_0^\cdot P_{p}(\cdot-r) \left( b(r)\cdot\nabla v(r) \right) \mathrm dr \in C([0,T]; H^{1+\delta}_p)$  with
\[
\left \| \int_0^\cdot P_{p}(\cdot-r) \left( b(r)\cdot\nabla v(r)\right) \mathrm dr\right \|^{(\rho)}_{\infty, H^{1+\delta}_p} \leq c(\rho) \|v\|^{(\rho)}_{\infty, H^{1+\delta}_p};
\]
 \item[(iii)] $\lambda \int_0^\cdot P_{p}(\cdot-r)  v(r) \mathrm dr   \in C([0,T]; H^{1+\delta}_p)$ with
\[
\left \| \lambda \int_0^\cdot P_{p}(\cdot-r)   v(r) \mathrm dr \right \|^{(\rho)}_{\infty, H^{1+\delta}_p} \leq c(\rho) \|v\|^{(\rho)}_{\infty, H^{1+\delta}_p},
\]
\end{itemize}
where the constant $c(\rho)$ is independent of $v$ and tends to zero as $\rho$ tends to infinity.
\end{lemma}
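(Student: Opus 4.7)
The plan is to reduce all three parts to the combination of three tools already available in the paper: the smoothing estimate of Lemma~\ref{lm: semigroup from -beta to 1+delta}, the paraproduct bound of Remark~\ref{rem: pointwise product}, and the weighted-time integral estimate of Lemma~\ref{lm: stima integrale con fz gamma}. Continuity in $t$ will come from Proposition~\ref{pr: holder continuity with b bounded}, while the contractive constant $c(\rho)\to 0$ will emerge from the bound $\rho^{\theta-1}\to 0$ once $\theta<1$, which is precisely what the hypothesis $\beta+\delta<1$ guarantees.

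For the \emph{continuity in $C([0,T];H^{1+\delta}_p)$} in (i)--(iii), I will verify that the three integrands lie in $L^\infty([0,T];H^{-\beta}_p)$ and then invoke Proposition~\ref{pr: holder continuity with b bounded}. Specifically: in (i) this membership is Assumption~\ref{assumption} combined with Remark~\ref{rem: b in L_p}; in (ii) it follows from Remark~\ref{rem: pointwise product}, which also supplies the key estimate $\|b(r)\cdot\nabla v(r)\|_{H^{-\beta}_p}\le c\,\|b\|_{\infty,H^{-\beta}_q}\|v(r)\|_{H^{1+\delta}_p}$; in (iii) it is the Sobolev embedding $H^{1+\delta}_p\hookrightarrow H^{-\beta}_p$. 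The proposition then gives H\"older continuity of each integral into $H^{2-2\epsilon-\beta}_p$ for arbitrary $\epsilon>0$, and choosing $\epsilon\le(1-\beta-\delta)/2$---which is available precisely because $\beta+\delta<1$---yields $2-2\epsilon-\beta\ge 1+\delta$, so after the Sobolev embedding we land in $C([0,T];H^{1+\delta}_p)$.

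For the \emph{quantitative contractive bounds} in (ii) and (iii), I will estimate directly in $H^{1+\delta}_p$. In (ii), Lemma~\ref{lm: semigroup from -beta to 1+delta} combined with Remark~\ref{rem: pointwise product} gives
\[
\|P_p(t-r)(b(r)\cdot\nabla v(r))\|_{H^{1+\delta}_p}\le c\,(t-r)^{-\frac{1+\delta+\beta}{2}}\|b\|_{\infty,H^{-\beta}_q}\|v(r)\|_{H^{1+\delta}_p}.
\]
Multiplying by $e^{-\rho t}$, writing $e^{-\rho t}=e^{-\rho(t-r)}e^{-\rho r}$, and absorbing $e^{-\rho r}\|v(r)\|_{H^{1+\delta}_p}\le\|v\|^{(\rho)}_{\infty,H^{1+\delta}_p}$, the supremum in $t$ reduces to controlling $\int_0^t e^{-\rho(t-r)}(t-r)^{-\theta}\mathrm dr$ with $\theta=(1+\delta+\beta)/2<1$; Lemma~\ref{lm: stima integrale con fz gamma} then bounds this by $\Gamma(1-\theta)\rho^{\theta-1}$, giving $c(\rho)\to 0$. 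Part (iii) is identical but simpler: the uniform bound $\|P_p(t)\|_{H^{1+\delta}_p\to H^{1+\delta}_p}\le 1$ removes the singular factor, and applying Lemma~\ref{lm: stima integrale con fz gamma} with $\theta=0$ gives $c(\rho)\le\lambda/\rho$.

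The only genuine technical point---everything else being bookkeeping---is confirming that $b(r)\cdot\nabla v(r)$ is well-defined in $H^{-\beta}_p$ at each time, so that the smoothing estimate applies and the resulting time singularity $(t-r)^{-(1+\delta+\beta)/2}$ is integrable. Both requirements are exactly encoded in the standing hypothesis $(\delta,p)\in K(\beta,q)$ together with $\beta+\delta<1$, making this step a direct application of the assumptions rather than a real obstacle.
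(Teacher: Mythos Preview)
Your proposal is correct and follows essentially the same route as the paper: continuity in $C([0,T];H^{1+\delta}_p)$ via Proposition~\ref{pr: holder continuity with b bounded} with the choice $\epsilon=(1-\beta-\delta)/2$, and the contractive estimates in (ii)--(iii) via Lemma~\ref{lm: semigroup from -beta to 1+delta}, Remark~\ref{rem: pointwise product}, the splitting $e^{-\rho t}=e^{-\rho(t-r)}e^{-\rho r}$, and Lemma~\ref{lm: stima integrale con fz gamma}. Two cosmetic remarks: in (i) you need not invoke Assumption~\ref{assumption} or Remark~\ref{rem: b in L_p}, since the lemma's own hypothesis $b\in L^\infty([0,T];H^{-\beta}_{p,q})$ already gives $b\in L^\infty([0,T];H^{-\beta}_p)$ directly; and your closing reference to ``the standing hypothesis $(\delta,p)\in K(\beta,q)$'' is slightly off, as the lemma is stated under its own (more general) assumptions.
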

Observe that $(\delta,p)\in K(\beta, q)$ satisfies the hypothesis in Lemma \ref{lm: contraction property of integral operator}.
\begin{proof}
(i) Lemma  \ref{lm: semigroup from -beta to 1+delta}
 implies  that $P_p(t) b(t)\in H^{1+\delta}_p $ for each $t\in[0,T]$.
Choosing $\epsilon = \frac{1-\beta-\delta}{2}$,
 Proposition \ref{pr: holder continuity with b bounded}
implies item (i).
 
(ii) Similarly to part (i), the first part follows by 
Proposition \ref{pr: holder continuity with b bounded}.
Moreover
\begin{align*}
& \sup_{0\leq t\leq T} \mathrm e^{-\rho t} \left\|\int_0^t P_{p}(t-r)\left( b(r)\cdot\nabla v(r) \right)  \mathrm dr\right\|_{H^{1+\delta}_p} \\
&\leq c \sup_{0\leq t\leq T}\int_0^t  \mathrm e^{-\rho t} (t-r)^{-\frac{1+\delta+\beta}{2}} \|v(r)\|_{H^{1+\delta}_p} \|b(r) \|_{H^{-\beta}_q}\mathrm dr \\
&\leq 	c \|b \|_{\infty,H^{-\beta}_q } \sup_{0\leq t\leq T}\int_0^t   \mathrm e^{-\rho r} \|v(r)\|_{H^{1+\delta}_p}  \mathrm e^{-\rho (t-r)} (t-r)^{-\frac{1+\delta+\beta}{2}}  \mathrm dr \\
&\leq c \|v\|^{(\rho)}_{\infty, H^{1+\delta}_p} \|b \|_{\infty, H^{-\beta}_q} \rho^\frac{\delta+\beta-1}{2} <\infty.
\end{align*}
Thus $\left \|\int_0^\cdot  P_{p}(\cdot-r) \left( b(r)\cdot\nabla v(r) \right)  \mathrm dr \right  \|^{(\rho)}_{\infty, H^{1+\delta}_p}\leq c(\rho) \|v\|^{(\rho)}_{\infty, H^{1+\delta}_p} $.\\
(iii) Similarly to parts (i) and (ii) the continuity property follows
 by Proposition \ref{pr: holder continuity with b bounded}. Then  
\begin{align*}
 \sup_{0\leq t\leq T} \mathrm e^{-\rho t} \left\|\int_0^t P_{p}(t-r) v(r)  \mathrm dr\right\|_{H^{1+\delta}_p}
&\leq c \sup_{0\leq t\leq T}\int_0^t  \mathrm e^{-\rho t} \|v(r)\|_{H^{1+\delta}_p} \mathrm dr \\
&\leq c \|v\|^{(\rho)}_{\infty, H^{1+\delta}_p}  \rho^{-1} <\infty. \qedhere
\end{align*}

\end{proof}

\subsection{Existence}\label{sc: existence}

Let us now introduce the \emph{integral operator} $I_t(v)$ as the right hand side of \eqref{eq: mild solution to fwd PDE}, that is, given any $v\in C ([0,T]; H^{1+\delta}_p)$, we define for all $t\in[0,T]$
\begin{equation}\label{eq: integral operator}
I_t(v):= \int_0^t P_{p}(t-r) \left( b(r)\cdot\nabla v(r) \right) \mathrm dr +\int_0^t P_{p}(t-r)( b(r) - \lambda  v(r)) \mathrm dr.
\end{equation}
By Lemma \ref{lm: contraction property of integral operator}, the integral operator is  well-defined and it is a linear operator on  $ C([0,T]; H^{1+\delta}_p)$.

Let us remark that Definition \ref{def: mild sol in H} is in fact meaningful under the assumptions of Lemma \ref{lm: contraction property of integral operator}, which are more general than the ones of Definition \ref{def: mild sol in H} (see Remark \ref{rem: discussion on parameters}).

\begin{theorem}\label{thm: contraction property of integral operator}
Let $1<p,q<\infty$ and $0<\beta<\delta$ with $q>p \vee\frac{d}{\delta}$ and let $\beta+\delta<1$. Then for $b\in L^\infty([0,T]; H^{-\beta}_{p,q})$ 
there exists a unique mild solution $v$ to the PDE \eqref{eq: mild solution to fwd PDE} in $H^{1+\delta}_p$. Moreover for any $0<\gamma<1-\delta-\beta$ the solution $v$ is in $C^\gamma ([0,T]; H^{1+\delta}_p)$.
\end{theorem}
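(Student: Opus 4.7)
The plan is a Banach fixed point argument for the integral operator $I$ defined in \eqref{eq: integral operator}, carried out in the space $C([0,T]; H^{1+\delta}_p)$ equipped with the equivalent weighted norm $\|\cdot\|^{(\rho)}_{\infty, H^{1+\delta}_p}$ of \eqref{eq: equivalent norm}, with $\rho$ chosen sufficiently large. The Hölder regularity will then be read off from Proposition \ref{pr: holder continuity with b bounded} once the fixed point is known to exist.

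Concretely, I would first observe that under the assumptions of the theorem, Lemma \ref{lm: contraction property of integral operator} guarantees that each of the three pieces of $I_t(v)$ — the free term $\int_0^t P_p(t-r) b(r)\mathrm dr$, the drift term $\int_0^t P_p(t-r)(b(r)\cdot\nabla v(r))\mathrm dr$, and the zero-order term $\lambda\int_0^t P_p(t-r) v(r)\mathrm dr$ — belongs to $C([0,T]; H^{1+\delta}_p)$. Thus $I$ is a well-defined affine self-map of this Banach space.

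Next, I would use the affine structure: for $v_1,v_2\in C([0,T]; H^{1+\delta}_p)$ the free term cancels, leaving
\begin{equation*}
I_t(v_1)-I_t(v_2)=\int_0^t P_p(t-r)\bigl(b(r)\cdot\nabla(v_1-v_2)(r)\bigr)\mathrm dr-\lambda\int_0^t P_p(t-r)(v_1-v_2)(r)\mathrm dr.
\end{equation*}
Applying items (ii) and (iii) of Lemma \ref{lm: contraction property of integral operator} to $v_1-v_2$ yields
\begin{equation*}
\|I(v_1)-I(v_2)\|^{(\rho)}_{\infty, H^{1+\delta}_p}\le 2c(\rho)\,\|v_1-v_2\|^{(\rho)}_{\infty, H^{1+\delta}_p},
\end{equation*}
with $c(\rho)\to 0$ as $\rho\to\infty$. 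Choosing $\rho$ large enough that $2c(\rho)<1$ makes $I$ a strict contraction in the $\|\cdot\|^{(\rho)}$-norm; since this norm is equivalent to the standard supremum norm, $C([0,T]; H^{1+\delta}_p)$ is still a complete metric space for it, and Banach's fixed point theorem produces a unique $v$ with $v=I(v)$, i.e.\ the unique mild solution of \eqref{eq: mild solution to fwd PDE}.

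Finally, for the Hölder regularity I would set $f(r):=b(r)\cdot\nabla v(r)+b(r)-\lambda v(r)$ and check that $f\in L^\infty([0,T]; H^{-\beta}_p)$: the term $b\cdot\nabla v$ by Remark \ref{rem: pointwise product}, $b$ itself by Remark \ref{rem: b in L_p}, and $\lambda v$ via the continuous embedding $H^{1+\delta}_p\hookrightarrow H^{-\beta}_p$. Then $v(t)=\int_0^t P_p(t-r)f(r)\mathrm dr$ falls into the hypotheses of Proposition \ref{pr: holder continuity with b bounded}, which (after choosing $\epsilon$ so that $2-2\epsilon-\beta=1+\delta$) yields the desired Hölder continuity in $t$ with values in $H^{1+\delta}_p$. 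I do not expect any serious obstacle here: the key analytic work — the smoothing estimate of Lemma \ref{lm: semigroup from -beta to 1+delta}, the pointwise product bound of Lemma \ref{lm: pointwise product}, and the weighted-norm integral estimates of Lemma \ref{lm: stima integrale con fz gamma} — has already been done, and the theorem is essentially an assembly of these pieces via the weighted-norm contraction trick.
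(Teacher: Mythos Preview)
Your proposal is correct and follows exactly the paper's own argument: Lemma \ref{lm: contraction property of integral operator} for the contraction in the weighted norm, Banach's fixed point theorem for existence and uniqueness, and Proposition \ref{pr: holder continuity with b bounded} applied to $b$, $b\cdot\nabla v$, $v\in L^\infty([0,T];H^{-\beta}_p)$ for the H\"older regularity. The only cosmetic slip is citing Remark \ref{rem: b in L_p} for $b\in L^\infty([0,T];H^{-\beta}_p)$ --- under the theorem's hypothesis $b\in L^\infty([0,T];H^{-\beta}_{p,q})$ this is immediate from the definition of $H^{-\beta}_{p,q}=H^{-\beta}_p\cap H^{-\beta}_q$.
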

\begin{proof}
By Lemma \ref{lm: contraction property of integral operator} the integral operator is a contraction for some $\rho$ large enough, thus by the Banach fixed point theorem there exists a unique mild solution $v\in C ([0,T]; H^{1+\delta}_p)$ to the PDE \eqref{eq: mild solution to fwd PDE}. For this solution we obtain H\"older continuity in time of order $\gamma$ for each $0<\gamma< 1-\delta-\beta$. In fact each term on the right-hand side of \eqref{eq: integral operator} is $\gamma$-H\"older continuous by Proposition \ref{pr: holder continuity with b bounded} as $b,b\cdot \nabla v, v \in L^\infty ([0,T]; H^{-\beta}_p)$.
\end{proof}

\begin{remark}\label{rem: discussion on parameters}
By Theorem \ref{thm: contraction property of integral operator} and by the definition of $K(\beta, q)$, for each $(\delta, p)\in K(\beta,q)$ there exists a unique mild solution in  $H^{1+\delta}_p$. However notice that the assumptions of Theorem \ref{thm: contraction property of integral operator} are slightly more general than those of Assumption \ref{assumption} and of the set $K(\beta,q)$. Indeed, the following conditions are not required for the existence of the solution to the PDE (Lemma \ref{lm: contraction property of integral operator} and Theorem \ref{thm: contraction property of integral operator}):
 \begin{itemize}
 \item the condition $ \frac {d}{\delta}<p$ appearing in the definition of the region $K(\beta,q)$ is only needed in order to embed the fractional Sobolev space $H^{1+\delta}_p$ into $C^{1,\alpha}$ (Theorem \ref{thm: fractional Morrey ineq}).
  \item the condition $ q<\frac{d}{\beta} $ appearing in Assumption \ref{assumption} is only needed in Theorem \ref{thm: uniqueness of K solution} in order to show uniqueness for the solution $u$, independently of the choice of  $(\delta,p)\in K(\beta,q)$.
 \end{itemize}
\end{remark}

The following embedding theorem describes how to compare fractional Sobolev spaces with different orders and provides a generalisation of the Morrey inequality to fractional Sobolev spaces. For the proof we refer to \cite[Thm.~2.8.1, Remark 2]{triebel78}.

\begin{theorem}\label{thm: fractional Morrey ineq}
\emph{Fractional Morrey inequality.} Let $0<\delta< 1 $ and $d/\delta<p<\infty$. If $f\in H^{1+\delta}_p(\mathbb R^d)$ then there exists a unique version of $f$ (which we denote again by  $f$) such that $f$ is differentiable. Moreover $ f\in C^{1,\alpha}(\mathbb R^d)$ with $\alpha= \delta-d/p$ and
\begin{equation}\label{eq: fractional Morrey ineq}
\|f\|_{C^{1,\alpha}}\leq c \|f\|_{H^{1+\delta}_p}, \quad  \|\nabla f\|_{C^{0,\alpha}}\leq c \|\nabla f\|_{H^{\delta}_p},
\end{equation}
where $c=c(\delta, p, d)$ is a universal constant.\\
\emph{Embedding property.} For $1<p\leq q <\infty$ and $s-\frac dp \geq t- \frac dq$ we have
\begin{equation}\label{eq: embedding property}
H^s_p(\mathbb R^d) \subset H^t_q(\mathbb R^d).
\end{equation}
\end{theorem}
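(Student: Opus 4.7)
The plan is to prove the two assertions separately, both exploiting the Bessel potential characterization $H^s_p = A_p^{-s/2}(L^p)$ already used in this section; equivalently (up to harmless constants coming from the shift in $A_p = I - \frac{1}{2}\Delta$), $A_p^{-s/2}$ is convolution with the Bessel kernel $G_s$ of order $s$, a positive function which decays exponentially at infinity and which behaves like the Riesz kernel $c_{d,s}|x|^{s-d}$ near the origin (for $0<s<d$), with gradient satisfying $|\nabla G_s(x)| \leq C|x|^{s-d-1}$ on the unit ball.

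For the embedding property, I would write any $f\in H^s_p$ as $f = G_s * h$ with $h\in L^p$ and $\|h\|_{L^p} = \|f\|_{H^s_p}$. Factoring $G_s = G_t * G_{s-t}$ gives $f = G_t *(G_{s-t}*h)$, so the inclusion $H^s_p \subset H^t_q$ reduces to boundedness of convolution with $G_{s-t}$ from $L^p$ into $L^q$. Decomposing $G_{s-t}$ into its singular part near the origin (dominated by the Riesz kernel of order $s-t$, to which the classical Hardy--Littlewood--Sobolev inequality applies precisely under the gap condition $s - d/p \geq t - d/q$) and its exponentially decaying tail (which lies in every $L^r$ and is handled by Young's inequality) closes this step.

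For the fractional Morrey inequality, observe first that $\partial_i : H^{1+\delta}_p \to H^\delta_p$ is bounded by the Fourier/Bessel characterization, so it suffices to prove the Hölder estimate $\|\nabla f\|_{C^{0,\alpha}} \leq c \|\nabla f\|_{H^\delta_p}$ with $\alpha = \delta - d/p$, together with an $L^\infty$-bound for $f$ itself (which already follows from the embedding step, since $\delta > d/p$ gives $H^\delta_p \hookrightarrow L^\infty$ via $L^q$ with $q=\infty$). Thus I must prove that $g = G_\delta * h$ with $h\in L^p$ satisfies $|g(x) - g(y)| \leq c \|h\|_{L^p} |x-y|^{\delta - d/p}$. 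I would split the convolution integral defining $g(x)-g(y)$ over the ball $B(x, 2|x-y|)$ and its complement: on the ball I estimate $|G_\delta(x-z)| + |G_\delta(y-z)|$ pointwise by $C(|x-z|^{\delta-d} + |y-z|^{\delta-d})$ and apply Hölder's inequality with conjugate exponent $p'$, whose convergence requires exactly $(d-\delta)p' < d$, i.e.\ $\delta p > d$ (this is precisely the hypothesis $p > d/\delta$), and produces the power $|x-y|^{\delta - d/p}$; on the complement the mean value theorem gives $|G_\delta(x-z) - G_\delta(y-z)| \leq |x-y|\sup_w |\nabla G_\delta(w-z)| \leq C|x-y|\,|x-z|^{\delta-d-1}$, and Hölder once more produces the same power. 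The main technical obstacle is the careful bookkeeping of the Bessel kernel bounds in the splitting, but once those are in place both inequalities in \eqref{eq: fractional Morrey ineq} follow with constants depending only on $\delta, p, d$, and a standard mollification argument selects the differentiable representative of $f$.
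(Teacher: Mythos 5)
The paper does not actually prove this theorem: it cites it directly from Triebel (Thm.\ 2.8.1, Remark 2 of \cite{triebel78}). Your proof is therefore a genuinely different route, namely a self-contained argument via the Bessel kernel $G_s$, and it has the merit of being elementary and of matching the paper's own normalization $H^s_p = A_p^{-s/2}(L^p)$ with $A_p = I - \frac12\Delta$ (so $A_p^{-s/2}$ really is convolution with the standard Bessel kernel up to scaling). For the Morrey part, the split over $B(x,2|x-y|)$ and its complement, with Hölder on the ball and the mean-value theorem plus the gradient bound $|\nabla G_\delta(w)| \le C|w|^{\delta-d-1}$ off the ball, is correct; the exponent bookkeeping forces $(d-\delta)p' < d$, i.e.\ $p > d/\delta$, and yields exactly $\alpha = \delta - d/p$, while the $L^\infty$ bound for $f$ itself follows from $G_{1+\delta}\in L^{p'}$ under the same condition, and boundedness of $\partial_i: H^{1+\delta}_p \to H^\delta_p$ reduces the $\nabla f$ estimate to the order-$\delta$ case as you say.

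One small imprecision in the embedding step: the Hardy--Littlewood--Sobolev inequality applies at the scaling-critical index, i.e.\ under the \emph{equality} $s - d/p = t - d/q$, not under the inequality $s - d/p \ge t - d/q$. For the strictly subcritical case $s - d/p > t - d/q$ you should either first use the trivial embedding $H^s_p \subset H^{s'}_p$ with $s'=t+d/p-d/q \le s$ and then apply HLS at $s'$, or observe that the truncated singular part $(G_{s-t})\mathbf{1}_{B(0,1)}$ lies in $L^r$ for every $r < d/(d-(s-t))$, and Young's inequality with such an $r$ is available precisely when the gap condition is strict. Since your decomposition already invokes both HLS (for the singular piece) and Young (for the tail), the machinery is all there and the argument closes; the only thing to fix is the phrasing ``HLS applies precisely under $s-d/p\ge t-d/q$,'' which conflates the two regimes. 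As stated the proof is sound after that clarification.
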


\begin{remark}
According to the fractional Morrey inequality, if $u(t)\in H^{1+\delta}_p$ then $\nabla u(t)\in C^{0,\alpha}$ for $\alpha = \delta- d/p$ if $p>d/\delta$. In this case the condition on the pointwise product $q>p\vee d/\delta$ reduces to $q>p$.
\end{remark}

\subsection{Uniqueness}
 In this section we show that the solution $u$ is unique, independently of the choice of $(\delta,p)\in K(\beta,q)$.

\begin{lemma}\label{lm: mild solutions coincide in S'}
Let $u$ be a mild solution in $\mathcal{S}^{^{\prime}}$ such that $u\in C\left(  \left[  0,T\right]  ;H_{p}^{1+\delta}  \right)  $ for some $(\delta, p) \in K(\beta, q)$. Then $u$ is a mild solution of  \eqref{eq: backwards PDE} 
in $H_{p}^{1+\delta}$.
\end{lemma}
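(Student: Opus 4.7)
The plan is to test the $\mathcal{S}^{\prime}$ mild identity against arbitrary $\psi\in\mathcal{S}$, move the heat semigroup $P(r-t)$ across the dual pairing, apply Fubini, and thereby recover the $H_{p}^{1+\delta}$ mild identity. The regularity assumption on $u$ together with Assumption \ref{assumption} will guarantee that every object involved actually lives in $H_{p}^{-\beta}$, which is what allows us to identify $P_{\mathcal{S}^{\prime}}$ with $P_{p}$.

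First I would check that all ingredients of the Bochner integrals defining a mild solution in $H_{p}^{1+\delta}$ are in $L^{\infty}([0,T];H_{p}^{-\beta})$. Since $(\delta,p)\in K(\beta,q)$ forces $p>d/\delta>d/(1-\beta)=\tilde q$, so $p\in(\tilde q,q)$, Remark \ref{rem: b in L_p} gives $b\in L^{\infty}([0,T];H_{p}^{-\beta})$. Remark \ref{rem: pointwise product} (an application of Lemma \ref{lm: pointwise product}) together with $u\in C([0,T];H_{p}^{1+\delta})$ yields $b\cdot\nabla u\in L^{\infty}([0,T];H_{p}^{-\beta})$, and $\lambda u\in L^{\infty}([0,T];H_{p}^{-\beta})$ by the embedding $H_{p}^{1+\delta}\hookrightarrow H_{p}^{-\beta}$ (Theorem \ref{thm: fractional Morrey ineq}). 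By Lemma \ref{lm: contraction property of integral operator} the right-hand side of \eqref{mild in Sobolev} therefore defines an element of $C([0,T];H_{p}^{1+\delta})$; call it $\tilde u(t)$.

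Next I would rewrite the $\mathcal{S}^{\prime}$ identity \eqref{weak-mild} using the duality relation between $P(t)$ on $\mathcal{S}$ and its extension $P_{\mathcal{S}^{\prime}}(t)$: for every $r\in[t,T]$ and $\psi\in\mathcal{S}$,
\[
\langle b(r)\cdot\nabla u(r),P(r-t)\psi\rangle=\langle P_{\mathcal{S}^{\prime}}(r-t)\bigl(b(r)\cdot\nabla u(r)\bigr),\psi\rangle,
\]
and analogously for the term $b(r)-\lambda u(r)$. Since $P_{\mathcal{S}^{\prime}}(t)$ restricted to $L^{p}$ coincides with $P_{p}(t)$, and $H_{p}^{-\beta}\subset\mathcal{S}^{\prime}$, we may replace $P_{\mathcal{S}^{\prime}}(r-t)$ by $P_{p}(r-t)$ on the integrands above. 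The combined integrability bounds obtained in the first step, together with \eqref{eq: semigroup from -beta to 1+delta}, justify a Fubini-type exchange of the $\mathrm dr$ integral and the pairing $\langle\,\cdot\,,\psi\rangle$, so \eqref{weak-mild} becomes
\[
\langle u(t),\psi\rangle=\langle \tilde u(t),\psi\rangle\qquad\text{for every }\psi\in\mathcal{S}.
\]
Hence $u(t)=\tilde u(t)$ in $\mathcal{S}^{\prime}$, and since both sides already belong to $H_{p}^{1+\delta}\subset H_{p}^{-\beta}$, the equality holds in $H_{p}^{1+\delta}$, which is exactly \eqref{mild in Sobolev}.

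The main obstacle I anticipate is the rigorous justification of the Fubini step and of the identification $P_{\mathcal{S}^{\prime}}\!\restriction_{H_{p}^{-\beta}}=P_{p}$: one must integrate a Bochner-integrable $H_{p}^{-\beta}$-valued function against a test function with the singular kernel $p_{r-t}$, but this is controlled by $\int_t^T(r-t)^{-(1+\delta+\beta)/2}\,\mathrm dr<\infty$ via Lemma \ref{lm: semigroup from -beta to 1+delta}, which makes the exchange legitimate. Once this is in place, no further hypothesis (beyond the ones already in the statement) is required and the two notions of mild solution agree.
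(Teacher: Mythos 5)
Your proposal is correct and follows essentially the same route as the paper: establish that $b$, $b\cdot\nabla u$, and $\lambda u$ all lie in $L^{\infty}([0,T];H_{p}^{-\beta})$ (the paper's Remark \ref{remark 2}), then use the duality $\langle h,P(s)\psi\rangle=\langle P_{p}(s)h,\psi\rangle$ for $h\in H_{p}^{-\beta}$ to move the semigroup across the pairing and identify the two mild formulations. The paper leaves the Fubini exchange implicit, while you flag it explicitly and justify it via the integrable singularity from Lemma \ref{lm: semigroup from -beta to 1+delta}; that is a fair point to make but it is the same argument, not a different one (one small inaccuracy: the embedding $H_{p}^{1+\delta}\subset H_{p}^{-\beta}$ with the same $p$ is just the monotonicity of Bessel potential spaces in the smoothness index, not an instance of Theorem \ref{thm: fractional Morrey ineq}).
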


\begin{proof}
As explained in Remark \ref{remark 2},  $b\cdot\nabla u,b,\lambda
u\in L^{\infty}\left(  \left[  0,T\right]  ;H_{p}^{-\beta}  \right)  $. Given $\psi\in\mathcal{S}$ and $h\in
H_{p}^{-\beta}  $, we have%
\begin{equation}
\left\langle h,P\left(  s\right)  \psi\right\rangle =\left\langle
P_{p}\left(  s\right)  h,\psi\right\rangle, \label{duality}%
\end{equation}
for all $s\geq0$. Indeed, $P_{p}\left(  s\right)  h=P\left(  s\right)
h$ when $h\in\mathcal{S}$ and $\left\langle P\left(  s\right)  h
,\psi\right\rangle =\left\langle h,P\left(  s\right)  \psi\right\rangle $
when $h,\psi\in\mathcal{S}$, hence (\ref{duality}) holds for all $h,\psi\in\mathcal{S}$, therefore for all $h\in H_{p}^{-\beta}  $ by density. Hence, from identity (\ref{weak-mild}) we get
\begin{align*}
\left\langle u\left(  t\right)  ,\psi\right\rangle &=\int_{t}^{T}\left\langle
P_{p}\left(  r-t\right)  b\left(  r\right)  \cdot\nabla u\left(  r\right)
,\psi\right\rangle\mathrm dr\\
&+\int_{t}^{T}\left\langle P_{p}\left(  r-t\right)
\left(  b\left(  r\right)  -\lambda u\left(  r\right)  \right)  ,\psi
\right\rangle\mathrm dr.
\end{align*}
This implies (\ref{mild in Sobolev}).
\end{proof}

\begin{theorem}\label{thm: uniqueness of K solution}
The solution $u$ of \eqref{eq: backwards PDE} is unique, in the sense that 
for each $\kappa_1, \kappa_2\in K (\beta,q), $ given two mild solutions 
 $u^{\kappa_1}, u^{\kappa_2}$  of \eqref{eq: backwards PDE},
 there exists $\kappa_0 = (\delta_0, p_0) \in K(\beta,q)$ such that $u^{\kappa_1}, u^{\kappa_2} \in C([0,T];H^{1+\delta_0}_{p_0})$ and the two solutions coincide in this bigger space.
\end{theorem}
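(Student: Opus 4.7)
The strategy is to reduce the claim to the uniqueness statement already contained in Theorem \ref{thm: contraction property of integral operator}, which produces a unique mild solution within a single fixed space $H^{1+\delta_0}_{p_0}$. The plan is therefore to construct a common $\kappa_0 = (\delta_0,p_0)\in K(\beta,q)$ such that both $u^{\kappa_1}$ and $u^{\kappa_2}$ lie in $C([0,T]; H^{1+\delta_0}_{p_0})$ and satisfy the mild formulation there.

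First I would note that any mild solution $u^{\kappa_i}$ in $H^{1+\delta_i}_{p_i}$ is automatically a mild solution in $\mathcal{S}^{\prime}$ in the sense of Definition \ref{def: mild solution in S'}: testing \eqref{mild in Sobolev} against an arbitrary $\psi\in\mathcal{S}$ and invoking the duality \eqref{duality}---whose hypotheses are in force by Remark \ref{remark 2}---produces precisely \eqref{weak-mild}. Thus both $u^{\kappa_1}$ and $u^{\kappa_2}$ are mild solutions in $\mathcal{S}^{\prime}$.

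The heart of the argument is the construction of $\kappa_0$. I would set $p_0 := \max(p_1, p_2)$ and
\[
\delta_0 := \min_{i=1,2}\bigl(\delta_i - d/p_i\bigr) + d/p_0.
\]
The membership $(\delta_0, p_0) \in K(\beta,q)$ then follows by checking the four constraints defining $K(\beta,q)$: (i) $\delta_0 > d/p_0$, since $\delta_i - d/p_i > 0$ for $\kappa_i \in K(\beta,q)$; (ii) $\delta_0 < 1-\beta$, since at the index $i$ achieving $p_i = p_0$ one has $\delta_0 \leq \delta_i < 1-\beta$; (iii) $p_0 < q$, since each $p_i < q$; and (iv) $\delta_0 > \beta$, because $d/p_0 > d/q > \beta$, the last inequality being precisely the standing hypothesis $q < d/\beta$. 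The Sobolev embedding \eqref{eq: embedding property} now yields $H^{1+\delta_i}_{p_i} \hookrightarrow H^{1+\delta_0}_{p_0}$ for $i=1,2$, since $p_i \leq p_0$ and, by construction, $(1+\delta_i) - d/p_i \geq (1+\delta_0) - d/p_0$.

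To conclude, each $u^{\kappa_i}$ belongs to $C([0,T]; H^{1+\delta_0}_{p_0})$ by the embedding, is mild in $\mathcal{S}^{\prime}$ by the first step, and hence is mild in $H^{1+\delta_0}_{p_0}$ by Lemma \ref{lm: mild solutions coincide in S'}. Uniqueness of the mild solution in $H^{1+\delta_0}_{p_0}$ from Theorem \ref{thm: contraction property of integral operator} then forces $u^{\kappa_1} = u^{\kappa_2}$ in that space. I expect the main obstacle to be step (iv) in the construction of $\kappa_0$; this is precisely the point where the upper bound $q < d/\beta$ of Assumption \ref{assumption} is used in an essential way, as foreshadowed in Remark \ref{rem: discussion on parameters}.
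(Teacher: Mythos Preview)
Your proposal is correct and follows essentially the same route as the paper: construct a common $\kappa_0\in K(\beta,q)$ into which both solution spaces embed via \eqref{eq: embedding property}, then invoke Lemma~\ref{lm: mild solutions coincide in S'} and the uniqueness in Theorem~\ref{thm: contraction property of integral operator}. Your explicit formula $p_0=\max(p_1,p_2)$, $\delta_0=\min_i(\delta_i-d/p_i)+d/p_0$ in fact coincides with the paper's two-step geometric construction (move along the line of slope $d$, then vertically), and your explicit verification that $\kappa_0\in K(\beta,q)$---in particular the use of $q<d/\beta$ for the bound $\delta_0>\beta$---fills in a detail the paper leaves implicit.
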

\begin{proof}
In order to find a suitable $\kappa_0$ we proceed in two steps.

\textbf{Step 1.} Assume first that $p_1=p_2=:p$. Then $H^{\delta_i}_{p_i}\subset H_p^{\delta_1\wedge \delta_2}$. The intuition in Figure \ref{fig: the set K} is that we move downwards along the vertical line passing from $\frac1p$.

\textbf{Step 2.} If, on the contrary, $\frac{1}{p_1}<\frac{1}{p_2}$ (the opposite case is analogous) we may reduce ourselves to Step 1 in the following way: $H^{\delta_2 }_{p_2}\subset H^x_{p_1}$ for $x= \delta_2 - \frac{d}{p_2} +\frac{d}{p_1}$ (using  Theorem \ref{thm: fractional Morrey ineq}, equation \eqref{eq: embedding property}). Now $H^x_{p_1}$ and $H^{\delta_1}_{p_1}$ can be compared as in Step 1. The intuition in Figure \ref{fig: the set K} is that we move the rightmost point to the left along the line with slope $d$.

By Theorem \ref{thm: contraction property of integral operator} we have a unique mild solution $u^{\kappa_i}$ in $ C ([0,T]; H^{1+\delta_i}_{p_i})$ for each set of parameters $\kappa_i=(\delta_i,p_i)\in K(\beta,q)$, $i=0,1,2$. By Steps 1 and 2, the space with $i=0$ includes the other two, thus $u^{\kappa_i} \in  C ([0,T]; H^{1+\delta_0}_{p_0})$ for each $i=0,1,2$ and moreover $u^{\kappa_i}$ are mild solutions in $\mathcal S'$.  Lemma \ref{lm: mild solutions coincide in S'} concludes the proof.
\end{proof}

\subsection{Further regularity properties} We derive now stronger regularity properties for the mild solution $v$ of \eqref{eq: mild solution to fwd PDE}. Since $v(t,x) = u(T-t, x)$ the same properties hold for the mild solution $u$ of \eqref{mild in Sobolev}.

In the following lemma we show that the mild solution $v$ is differentiable in space and its gradient can be bounded by $\frac12$ for some $\lambda$ big enough. For this reason here we stress the dependence of the solution $v$ on the parameter $\lambda$ by writing $v_\lambda$.

\begin{lemma}\label{lm: bound for gradient u}
Let $(\delta, p)\in K(\beta,q)$ and let $v_\lambda$ be the mild solution to 
\eqref{eq: fwd PDE}
%\eqref{eq: mild solution to fwd PDE}
 in $H^{1+\delta}_p$. Fix $\rho$ such that the integral operator \eqref{eq: integral operator} is a contraction on $C([0,T];H^{1+\delta}_p)$ with the norm \eqref{eq: equivalent norm} and let $\lambda> \rho$. Then $v_\lambda(t)\in C^{1,\alpha}$ with $\alpha= \delta-d/p$  for each fixed $t$ and
%\marginpar{Bisognerebbe forse inserire la conseguenza che $v_\lambda$ e' limitato, con un relativo label}
\begin{align} \label{eq: bound for v}
&{ 
\sup_{0\leq t\leq T}\left( \sup_{x\in \mathbb R^d}  |  v_\lambda(t,x)| \right) \leq C, }
\\
\label{eq: bound for grad v}
&\sup_{0\leq t\leq T}\left( \sup_{x\in \mathbb R^d}  |\nabla v_\lambda(t,x)| \right)  \leq \frac{ c \|b\|_{\infty, H^{-\beta}_p} \lambda^{\frac{\delta+\beta-1}{2}}}{1 - c'  \| b\|_{\infty, H^{-\beta}_q} \lambda^{\frac{\delta+\beta-1}{2}} } ,
\end{align}
for some universal constants $C, c, c'$. In particular,
\[
\sup_{(t,x)\in [0,T]\times\mathbb R^d} |\nabla v_\lambda(t,x)| \to 0,
\]
as $\lambda \to \infty$.
\end{lemma}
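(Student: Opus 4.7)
The strategy is to bound $\|v_\lambda\|_{\infty, H^{1+\delta}_p}$ by a quantity going to $0$ as $\lambda\to\infty$, then transfer this to $L^\infty$ bounds on $v_\lambda$ and $\nabla v_\lambda$ via the fractional Morrey embedding. Since $(\delta,p)\in K(\beta,q)$ forces $p>d/\delta$, Theorem \ref{thm: fractional Morrey ineq} provides the continuous embedding $H^{1+\delta}_p\hookrightarrow C^{1,\alpha}$ with $\alpha=\delta-d/p$, so it suffices to control $\|v_\lambda(t)\|_{H^{1+\delta}_p}$ uniformly in $t$ by an expression with the correct dependence on $\lambda$.

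The main obstacle, and the crux of the proof, is extracting the large parameter $\lambda$ as a genuinely small factor. A direct estimate of the term $-\lambda\int_0^{\cdot} P_p(\cdot-r)v_\lambda(r)\,\mathrm dr$ in an equivalent $(\rho)$-norm produces a constant of order $\lambda/(1+\rho)$, which is not small unless $\rho\gg\lambda$, and one cannot then take $\lambda\to\infty$. The right move is to absorb the $-\lambda v$ term into the semigroup: the shifted family $\tilde P_p(t):=\mathrm e^{-\lambda t}P_p(t)$ is a strongly continuous semigroup generated by $-(A_p+\lambda I)=\tfrac12\Delta-(1+\lambda)I$, which is exactly the linear part of \eqref{eq: fwd PDE}. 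Hence the mild formulation \eqref{eq: mild solution to fwd PDE} is equivalent to
\[
v_\lambda(t)=\int_0^t \mathrm e^{-\lambda(t-r)}P_p(t-r)\bigl(b(r)\cdot\nabla v_\lambda(r)+b(r)\bigr)\mathrm dr,
\]
now with no explicit $v_\lambda$ outside the nonlinear piece.

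After this reformulation, everything becomes routine. Taking the $H^{1+\delta}_p$ norm, applying Lemma \ref{lm: semigroup from -beta to 1+delta} (applicable since $\delta+\beta<1$) and Remark \ref{rem: pointwise product} to get $\|b(r)\cdot\nabla v_\lambda(r)\|_{H^{-\beta}_p}\leq c\|b\|_{\infty,H^{-\beta}_q}\|v_\lambda(r)\|_{H^{1+\delta}_p}$, I obtain
\[
\|v_\lambda(t)\|_{H^{1+\delta}_p}\leq c\int_0^t \mathrm e^{-\lambda(t-r)}(t-r)^{-\frac{1+\delta+\beta}{2}}\bigl(\|b\|_{\infty,H^{-\beta}_q}\|v_\lambda(r)\|_{H^{1+\delta}_p}+\|b\|_{\infty,H^{-\beta}_p}\bigr)\mathrm dr.
\]
Lemma \ref{lm: stima integrale con fz gamma}, inequality \eqref{eq 1 stima integrale con fz gamma} with $\theta=(1+\delta+\beta)/2<1$, bounds the time integral by $C\lambda^{(\delta+\beta-1)/2}$. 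Taking the supremum in $t$ and choosing $\lambda$ large enough that $c\|b\|_{\infty,H^{-\beta}_q}\lambda^{(\delta+\beta-1)/2}<1$, a rearrangement yields
\[
\|v_\lambda\|_{\infty, H^{1+\delta}_p}\leq\frac{c\|b\|_{\infty,H^{-\beta}_p}\lambda^{(\delta+\beta-1)/2}}{1-c'\|b\|_{\infty,H^{-\beta}_q}\lambda^{(\delta+\beta-1)/2}}.
\]

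The conclusion follows by applying the fractional Morrey embedding: $\|v_\lambda(t)\|_\infty+\|\nabla v_\lambda(t)\|_\infty\leq c\|v_\lambda(t)\|_{H^{1+\delta}_p}$, which produces \eqref{eq: bound for v} (a uniform bound for $\lambda$ beyond some threshold) and \eqref{eq: bound for grad v}. The defining condition $\delta<1-\beta$ makes the exponent $(\delta+\beta-1)/2$ strictly negative, so $\lambda^{(\delta+\beta-1)/2}\to 0$ and therefore $\sup_{(t,x)\in[0,T]\times\mathbb R^d}|\nabla v_\lambda(t,x)|\to 0$ as $\lambda\to\infty$.
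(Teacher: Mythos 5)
Your argument tracks the paper's proof almost step for step: both hinge on absorbing the $-\lambda v$ term into the semigroup kernel $e^{-\lambda(t-r)}P_p(t-r)$, then extracting the factor $\lambda^{(\delta+\beta-1)/2}$ from Lemma \ref{lm: semigroup from -beta to 1+delta} and Lemma \ref{lm: stima integrale con fz gamma}, and rearranging the resulting a priori estimate. Two minor differences: you bound $\|v_\lambda(t)\|_{H^{1+\delta}_p}$ directly while the paper differentiates the mild form and bounds $\|\nabla v_\lambda(t)\|_{H^\delta_p}$ --- your variant is slightly cleaner and gives the bonus that $\sup|v_\lambda|\to 0$ as $\lambda\to\infty$, strengthening \eqref{eq: bound for v}; and you call the passage to the $e^{-\lambda t}$-shifted mild form ``routine,'' whereas the paper devotes the bulk of its proof to establishing this identity carefully by testing against $e^{ix\cdot k}$ --- this is not a gap, since the identity also follows from a direct Fubini computation using $b\cdot\nabla v_\lambda + b\in L^\infty([0,T];H^{-\beta}_p)$, but it does merit a sentence of justification rather than a bare assertion.
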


\begin{proof}
{ By Theorem \ref{thm: fractional Morrey ineq} and the definition of the set $K(\beta,q)$ we have that $v_\lambda(t)\in C^{1,\alpha}$ and \eqref{eq: bound for v} holds using the definition of the norms in $ C ([0,T]; H^{1+\delta}_{p})$ and  $C^{1,\alpha}$. }
\\
Lemma \ref{lm: semigroup from -beta to 1+delta} ensures that $P_t w\in  H^{1+\delta}_p $ for  $w\in H^{-\beta}_p$ and so $\nabla P_t w\in  H^{\delta}_p$. By the fractional Morrey inequality  (Theorem \ref{thm: fractional Morrey ineq}) we have
  $P_tw\in C^{1,\alpha}(\mathbb R^d)$ and for each $t>0$
\begin{equation}\label{eq: Morrey applied to Pt w}
\sup_{x\in \mathbb R^d} |\left( \nabla P_t w \right)(x)| \leq c \|\nabla P_t w\|_{H^\delta_p} \leq c \| P_t w\|_{H^{1+\delta}_p} \leq c t^{-\frac{1+\delta+\beta}{2}} \| w\|_{H^{-\beta}_p},
\end{equation}
having used \eqref{eq: semigroup from -beta to 1+delta} in the latter inequality. Notice that the constant $c$ depends only on $\delta, p$ and $d$.

If we assume for a moment that the mild solution $v_\lambda$ of  \eqref{eq: fwd PDE}
  is also a solution of
\begin{align}\label{eq: mild form for v with e^lambda}
v_\lambda=&\int_0^t \mathrm e^{-\lambda (t-r)}P_{p}(t-r)\left(  b(r)\cdot\nabla v_\lambda(r)\right) \mathrm dr \\ \nonumber
&+\int_0^t \mathrm e^{-\lambda (t-r)}P_{p}(t-r) b(r) \mathrm dr,
\end{align}
then differentiating in $x$ we get
\begin{align*}
 \nabla v_\lambda(t, \cdot)= &\int_0^t \mathrm e^{-\lambda (t-r)} \nabla P_{p}(t-r) \left( b(r)\cdot\nabla v_\lambda(r) \right)\mathrm dr \\
  &+\int_0^t \mathrm e^{-\lambda (t-r)} \nabla P_{p}(t-r) b(r) \mathrm dr.
\end{align*}
We take the ${H^{\delta}_p}$-norm and use  \eqref{eq: Morrey applied to Pt w} with Lemma \ref{lm: pointwise product} to obtain
\begin{align*}
 \|\nabla v_\lambda(t)\|_{H^{\delta}_p}
\leq & c \int_0^t \mathrm e^{-\lambda (t-r)}(t-r)^{-\frac{1+\delta+\beta}{2}} \| b(r)\|_{H^{-\beta}_q} \|\nabla v_\lambda(r)\|_{H^{\delta}_p}  \mathrm dr \\
 &+c \int_0^t \mathrm e^{-\lambda (t-r)} (t-r)^{-\frac{1+\delta+\beta}{2}} \|b(r)\|_{H^{-\beta}_p} \mathrm dr \\
\leq & c'  \| b\|_{\infty, H^{-\beta}_q} \sup_{0\leq r\leq t}  \|\nabla v_\lambda(r)\|_{H^{\delta}_p} \int_0^t \mathrm e^{-\lambda (t-r)}(t-r)^{-\frac{1+\delta+\beta}{2}} \mathrm dr \\
 &+c\|b\|_{\infty, H^{-\beta}_p}  \int_0^t \mathrm e^{-\lambda (t-r)} (t-r)^{-\frac{1+\delta+\beta}{2}} \mathrm dr,
\end{align*}
so that by Lemma \ref{lm: stima integrale con fz gamma} we get
\begin{align*}
\sup_{0\leq t\leq T} \|\nabla v_\lambda(t)\|_{H^{\delta}_p} \leq& c'  \| b\|_{\infty, H^{-\beta}_q} \sup_{0\leq t\leq T}  \|\nabla v_\lambda(t)\|_{H^{\delta}_p} \lambda^{\frac{\delta+\beta-1}{2}} \\
&+ c\|b\|_{\infty, H^{-\beta}_p} \lambda^{\frac{\delta+\beta-1}{2}}.
\end{align*}
Choosing $\lambda > \lambda^*:=\left( \frac{1}{c'\| b\|_{\infty,H^{-\beta}_q}}\right)^{\frac{2}{\delta+\beta-1}} $ yields
\begin{align*}
\sup_{0\leq t\leq T} \|\nabla v_\lambda(t)\|_{H^{\delta}_p}
\leq \frac{ c\|b\|_{\infty, H^{-\beta}_p} \lambda^{\frac{\delta+\beta-1}{2}}}{1 - c'  \| b\|_{\infty, H^{-\beta}_q} \lambda^{\frac{\delta+\beta-1}{2}} },
\end{align*}
which tends to zero as $\lambda \to \infty$. The fractional Morrey inequality \eqref{eq: fractional Morrey ineq} together with the latter bound gives
\begin{align*}
\sup_{0\leq t\leq T}\left( \sup_{x\in \mathbb R^d}  |\nabla v_\lambda(t,x)| \right)  &\leq \sup_{0\leq t\leq T}  c \|\nabla v_\lambda(t)\|_{H^{\delta}_p}\\ 
& \leq \frac{ c\|b\|_{\infty, H^{-\beta}_p} \lambda^{\frac{\delta+\beta-1}{2}}}{1 - c'  \| b\|_{\infty, H^{-\beta}_q} \lambda^{\frac{\delta+\beta-1}{2}} },
\end{align*}
which tends to zero as $\lambda\to \infty$.

It is left to prove that a solution of  \eqref{eq: mild solution to fwd PDE} in $H_{p}^{1+\delta}$ it is also a solution of \eqref{eq: mild form for v with e^lambda}.
 There are several proofs of this fact, let us see
one of them. Computing each term against a test function $\psi\in\mathcal{S}$
we get the mild formulation
\begin{align*}
\left\langle v\left(  t\right)  ,\psi\right\rangle =&\int_{0}^{t}\left\langle
b\left(  r\right)  \cdot\nabla v\left(  r\right)  ,P\left(  t-r\right)
\psi\right\rangle\mathrm dr\\
&+\int_{0}^{t}\left\langle b\left(  r\right)  -\lambda
v\left(  r\right)  ,P\left(  t-r\right)  \psi\right\rangle\mathrm dr,
\end{align*}
used in the definition of mild solution in $\mathcal{S}^{\prime}$. Let us
choose in particular $\psi=\psi_{k}$ where $\psi_{k}\left(  x\right)
=e^{ix\cdot k}$, for a generic $k\in\mathbb{R}^{d}$, and let us write
$v_{k}\left(  t\right)  =\left\langle v\left(  t\right)  ,e^{ix\cdot
k}\right\rangle $ (the fact that $\psi_{k}$ is complex-valued makes no
difference, it is sufficient to treat separately the real and imaginary part).
Using the explicit formula for $P\left(  t\right)  $, it is not difficult to
check that
\begin{equation}\label{Fourier identity}%
P\left(  t\right)  \psi_{k}=e^{-\left(  \left\vert k\right\vert ^{2}+1\right)
t}\psi_{k}
\end{equation}
and therefore
\[
v_{k}\left(  t\right)  =\int_{0}^{t}e^{-\left(  \left\vert k\right\vert
^{2}+1\right)  \left(  t-r\right)  }g_{k}\left(  r\right) \mathrm dr-\lambda\int%
_{0}^{t}e^{-\left(  \left\vert k\right\vert ^{2}+1\right)  \left(  t-r\right)
}v_{k}\left(  r\right) \mathrm dr,
\]
where $g_{k}\left(  r\right)  =\left\langle b\left(  r\right)  \cdot\nabla
v\left(  r\right)  +b\left(  r\right)  ,\psi_{k}\right\rangle $. At the level
of this scalar equation it is an easy manipulation to differentiate and
rewrite it as%
\[
v_{k}\left(  t\right)  =\int_{0}^{t}e^{-\left(  \left\vert k\right\vert
^{2}+1+\lambda\right)  \left(  t-r\right)  }g_{k}\left(  r\right)\mathrm  dr.
\]
This identity, using again (\ref{Fourier identity}), can be rewritten as%
\[
\left\langle v\left(  t\right)  ,\psi_{k}\right\rangle =\int_{0}%
^{t}e^{-\lambda\left(  t-r\right)  }\left\langle b\left(  r\right)
\cdot\nabla v\left(  r\right)  +b\left(  r\right)  ,P\left(  t-r\right)
\psi_{k}\right\rangle dr
\]
and then we deduce  \eqref{eq: mild form for v with e^lambda} as we did in the proof of Lemma \ref{lm: mild solutions coincide in S'}.
\end{proof}

\begin{lemma}\label{lm: u jointly continuous}
Let $v=v_\lambda$ for $\lambda$ as in Lemma \ref{lm: bound for gradient u}. Then $v$ and $\nabla v$ are jointly continuous in $(t,x)$.
\end{lemma}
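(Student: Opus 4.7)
The plan is to upgrade the two pieces of regularity already in hand into joint continuity by a standard triangle-inequality splitting, using the fractional Morrey embedding as the bridge between Sobolev regularity in $t$ and pointwise regularity in $x$.

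First, I would collect the two one-variable regularity facts. By Theorem \ref{thm: contraction property of integral operator} the mild solution satisfies $v\in C^{\gamma}([0,T];H^{1+\delta}_p)$ for any $0<\gamma<1-\delta-\beta$. On the other hand, because $(\delta,p)\in K(\beta,q)$ we have $p>d/\delta$, so the fractional Morrey inequality in Theorem \ref{thm: fractional Morrey ineq} gives a continuous embedding $H^{1+\delta}_p\hookrightarrow C^{1,\alpha}$ with $\alpha=\delta-d/p$; in particular both $v(t,\cdot)$ and $\nabla v(t,\cdot)$ are continuous on $\mathbb R^d$ for every fixed $t$, and
\[
\|v(t)-v(s)\|_{L^\infty}+\|\nabla v(t)-\nabla v(s)\|_{L^\infty}\le c\,\|v(t)-v(s)\|_{H^{1+\delta}_p}.
\]
Composing this with the Hölder continuity in time yields that both $v$ and $\nabla v$, viewed as maps from $[0,T]$ into $C^{0,\alpha}(\mathbb R^d;\mathbb R^d)$, are (Hölder) continuous in $t$ uniformly in $x$.

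Next, for joint continuity at a point $(t,x)\in[0,T]\times\mathbb R^d$, I would take a sequence $(t_n,x_n)\to(t,x)$ and estimate
\[
|v(t_n,x_n)-v(t,x)|\le |v(t_n,x_n)-v(t,x_n)|+|v(t,x_n)-v(t,x)|.
\]
The first term is bounded by $\|v(t_n)-v(t)\|_{L^\infty}$, which tends to $0$ by the previous step. The second term tends to $0$ by the continuity of the fixed-time section $v(t,\cdot)\in C^{1,\alpha}\subset C^0$. The same decomposition applied to $\nabla v$ gives
\[
|\nabla v(t_n,x_n)-\nabla v(t,x)|\le \|\nabla v(t_n)-\nabla v(t)\|_{L^\infty}+|\nabla v(t,x_n)-\nabla v(t,x)|,
\]
and again the first summand vanishes by the $C^0([0,T];C^{0,\alpha})$-regularity obtained above, while the second vanishes by continuity of $\nabla v(t,\cdot)$.

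I do not expect any essential obstacle: everything reduces to combining the time-Hölder regularity in $H^{1+\delta}_p$ (Theorem \ref{thm: contraction property of integral operator}) with the spatial embedding into $C^{1,\alpha}$ (Theorem \ref{thm: fractional Morrey ineq}). The only point to be slightly careful about is that $\lambda$ is chosen large enough that the mild-solution framework of Lemma \ref{lm: bound for gradient u} applies, so that $v=v_\lambda$ actually enjoys the $H^{1+\delta}_p$-regularity; this is exactly the standing hypothesis of the statement.
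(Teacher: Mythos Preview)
Your proposal is correct and follows essentially the same route as the paper: split via the triangle inequality into a time-only and a space-only increment, control the former by the fractional Morrey embedding $H^{1+\delta}_p\hookrightarrow C^{1,\alpha}$ combined with the time-H\"older regularity $v\in C^\gamma([0,T];H^{1+\delta}_p)$ from Theorem~\ref{thm: contraction property of integral operator}, and the latter by the $C^{1,\alpha}$-regularity of the fixed-time section. The only cosmetic difference is that the paper writes the argument for two generic points and obtains the explicit quantitative bound $|\nabla v(t,x)-\nabla v(s,y)|\le \|v\|_{C^\gamma([0,T];H^{1+\delta}_p)}(|t-s|^\gamma+|x-y|^\alpha)$, whereas you phrase it via sequences.
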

\begin{proof}
It is sufficient to prove the claim for $\nabla v$. Let $(t,x), (s,y) \in [0,T]\times \mathbb R^d $. We have
\begin{align*}
 | \nabla v(t,x) - \nabla v(s,y) | \leq&  | \nabla v(t,x) -\nabla v(s,x)| + |\nabla v(s,x) -\nabla v(s,y)|\\
 \leq & \sup_{x\in \mathbb R^d} |\nabla v(t,x) - \nabla v(s,x)| + |\nabla v(s,x) - \nabla v(s,y)|\\
 \leq &   \| v(t,\cdot) -v(s,\cdot)\|_{C^{1,\alpha}} +  \|v(s,\cdot)\|_{C^{1,\alpha}} |x-y|^\alpha \\
 \leq &   \| v(t,\cdot) -v(s,\cdot)\|_{H^{1+\delta}_p} +  \|v(s,\cdot)\|_{H^{1+\delta}_p} |x-y|^\alpha \\
 \leq &   \| v(t,\cdot) -v(s,\cdot)\|_{H^{1+\delta}_p} +  \|v\|_{C^\gamma([0,T]; H^{1+\delta}_p)} |x-y|^\alpha \\
 \leq & \|v\|_{C^\gamma([0,T]; H^{1+\delta}_p)} (|t-s|^\gamma +|x-y|^\alpha),
\end{align*}
having used the embedding property \eqref{eq: fractional Morrey ineq} with $\alpha= \delta - d/p$ and the H\"older property of $v$ from Lemma \ref{lm: bound for gradient u}.
\end{proof}

\begin{lemma}\label{lm: phi is invertible}
 For $\lambda$ large enough the function $x\mapsto \varphi(t,x)$ defined as $\varphi(t,x)= x+u(t,x)$ is invertible for each fixed $t\in [0,T]$ and, denoting its inverse by  $\psi(t, \cdot)$ the function $(t,y)\mapsto \psi(t, y)$ is jointly continuous. Moreover $\psi(t, \cdot)$ is Lipschitz with Lipschitz constant $k=2$, for every $t\in[0,T]$.
\end{lemma}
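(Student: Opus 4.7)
The plan is to exploit Lemma \ref{lm: bound for gradient u} (transferred from $v$ to $u$ via $u(t,x)=v_\lambda(T-t,x)$) in order to make $\nabla u$ uniformly small, so that $\varphi(t,\cdot)$ becomes a small Lipschitz perturbation of the identity.

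First I would fix $\lambda$ large enough so that
\[
M:=\sup_{(t,x)\in[0,T]\times\mathbb R^d}|\nabla u(t,x)|\le \tfrac12,
\]
which is possible by the last statement of Lemma \ref{lm: bound for gradient u}. Joint continuity of $u$ and $\nabla u$ (Lemma \ref{lm: u jointly continuous}) then makes $\varphi$ continuously differentiable in $x$ uniformly in $t$, and the mean value inequality yields, for each fixed $t\in[0,T]$ and $x,x'\in\mathbb R^d$,
\[
|\varphi(t,x)-\varphi(t,x')|\ge |x-x'|-|u(t,x)-u(t,x')|\ge \tfrac12|x-x'|.
\]
This already gives injectivity of $\varphi(t,\cdot)$ and, once surjectivity is established, the Lipschitz bound $|\psi(t,y)-\psi(t,y')|\le 2|y-y'|$ on the inverse.

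For surjectivity, I would use a direct Banach fixed point argument: for any $y\in\mathbb R^d$, the map $T_y:\mathbb R^d\to\mathbb R^d$ defined by $T_y(x)=y-u(t,x)$ is a contraction with constant $M\le 1/2$ (since $u(t,\cdot)$ is Lipschitz with that constant). Since $\mathbb R^d$ is complete, $T_y$ has a unique fixed point $x=x(t,y)$, and by construction $\varphi(t,x)=y$. Hence $\varphi(t,\cdot)$ is a bijection of $\mathbb R^d$, and its inverse is $\psi(t,y):=x(t,y)$.

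The remaining point is joint continuity of $\psi$. Fix $(t_0,y_0)$ and let $x_0=\psi(t_0,y_0)$, so that $y_0=\varphi(t_0,x_0)$. Using the Lipschitz estimate on $\psi(t,\cdot)$ and the identity $\psi(t,\varphi(t,x_0))=x_0$, I write
\[
|\psi(t,y)-\psi(t_0,y_0)|=|\psi(t,y)-\psi(t,\varphi(t,x_0))|\le 2|y-\varphi(t,x_0)|,
\]
and then
\[
|y-\varphi(t,x_0)|\le |y-y_0|+|\varphi(t,x_0)-\varphi(t_0,x_0)|=|y-y_0|+|u(t,x_0)-u(t_0,x_0)|,
\]
which tends to $0$ as $(t,y)\to(t_0,y_0)$ thanks to joint continuity of $u$ (Lemma \ref{lm: u jointly continuous}). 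I do not expect a serious obstacle: the only slightly delicate point is choosing $\lambda$ uniformly so that the gradient bound \eqref{eq: bound for grad v} forces $M\le 1/2$, but this is granted by the last displayed limit in Lemma \ref{lm: bound for gradient u}.
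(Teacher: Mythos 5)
Your proposal is correct and follows essentially the same route as the paper: fix $\lambda$ so that Lemma \ref{lm: bound for gradient u} forces $\sup|\nabla u|\le\tfrac12$, use the Banach fixed-point map $x\mapsto y-u(t,x)$ to get bijectivity, derive the Lipschitz-$2$ bound for $\psi(t,\cdot)$ from the lower estimate $|\varphi(t,x)-\varphi(t,x')|\ge\tfrac12|x-x'|$, and combine this uniform Lipschitz bound with the time-continuity of $u$ to get joint continuity of $\psi$. The only cosmetic difference is that the paper first proves continuity of $t\mapsto\psi(t,y)$ for fixed $y$ (via the H\"older bound $\|u(t_1)-u(t_2)\|_{H^{1+\delta}_p}\le c|t_1-t_2|^\gamma$) and then notes that joint continuity follows, whereas you fold both directions into a single displayed estimate; the content is identical.
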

We will sometimes use the shorthand notation $\varphi_t$ for $\varphi(t, \cdot) $ and analogously for its inverse.

\begin{proof}
\textbf{Step 1} (invertibility of $\varphi_t$). Let $t$ be fixed and $x_1,x_2\in \mathbb R$. Recall that  by Lemma \ref{lm: bound for gradient u} for $\lambda$ large enough  we have
\begin{equation}\label{eq: bound 1/2 for nabla u}
\sup_{(t,x)\in [0,T]\times\mathbb R^d } | \nabla u(t,x) | \leq \frac12,
\end{equation}
so that
\[
 |u(t, x_2)- u(t,x_1)|\leq \int_0^1 | \nabla u(t, ax_2+(1-a)x_1)| |x_1-x_2|\mathrm d a \leq \frac12  |x_1-x_2|.
\]
Then the map $x\mapsto y-u(t,x)$ is a contraction  for each $y\in \mathbb R^d$ and therefore for each $y\in \mathbb R^d$ there exists a unique $x\in \mathbb R^d$ such that $x = y-u(t,x)$ that is $y= \varphi (t, x)$. Thus $\varphi(t, \cdot) $ is invertible for each $t\in [0,T]$ with inverse denoted by $\psi_t$.

\textbf{Step 2} (Lipschitz character of $\psi_t$, uniformly in $t$). To show that $\psi_t$ is Lipschitz with constant $k$ we can equivalently show that for each $ x_1,x_2\in \mathbb R^d$ it holds $|\varphi_t(x_1)-\varphi_t(x_2)|\geq \frac1k |x_1-x_2|$. We have
\begin{align*}
 |\varphi_t(x_1)-\varphi_t(x_2)| &\geq \inf_{x\in \mathbb R^d} |\nabla \varphi (t, x)| |x_1-x_2| = \frac12 |x_1-x_2|,
\end{align*}
because of \eqref{eq: bound 1/2 for nabla u} together with $\nabla \varphi = \mathrm I_d+\nabla u$.

\textbf{Step 3}  (continuity of $s\mapsto \psi(s, y)$). Let us fix $y\in \mathbb R^d$ and take $t_1, t_2\in[0,T]$. Denote by $x_1=\psi(t_1, y)$ and $x_2=\psi(t_2, y)$ so that $y= \varphi(t_1,x_1)= x_1+ u(t_1, x_1)$  and  $y= \varphi(t_2,x_2)= x_2+ u(t_2, x_2)$. We have
\begin{align} \label{eq: bound for phi-1(t)}
|\psi(t_1, y)-\psi(t_2, y) | &=  |x_1-x_2|  \nonumber \\
&= |u(t_1, x_1)-u(t_2, x_2) | \nonumber \\
& \leq |u(t_1, x_1)-u(t_1, x_2) |+ |u(t_1, x_2)-u(t_2, x_2) | \\
& \leq \frac12 |x_1-x_2| + |u(t_1, x_2)-u(t_2, x_2) | .\nonumber
\end{align}

Let us denote by $w(x):= u(t_1, x)-u(t_2, x)$. Clearly $w\in H^{1+\delta}_p$ for each $t_1,t_2$ and by Theorem \ref{thm: fractional Morrey ineq} (Morrey inequality) we have that $w$ is continuous, bounded and
\[
|u(t_1, x_2)-u(t_2, x_2) | \leq \sup_{x\in \mathbb R^d} |w( x)| \leq c \|w\|_{H^{1+\delta}_p}.
\]
By Theorem \ref{thm: contraction property of integral operator} $u\in C^\gamma ([0,T]; H^{1+\delta}_p)$ and so $ \|w\|_{H^{1+\delta}_p} \leq c |t_1-t_2|^\gamma$. Using this result together with \eqref{eq: bound for phi-1(t)} we obtain
\[
\frac12 |x_1-x_2| = \frac12 |\psi(t_1, y)-\psi(t_2, y) | \leq c |t_1-t_2|^\gamma,
\]
which shows the claim.\\
Continuity of $(t,y)\mapsto \psi(t, y)$ now follows.
\end{proof}

\begin{lemma}\label{lm: continuity of u wrt approximation}
 If $b_n\to b$ in $L^\infty\left ([0,T]; H^{-\beta}_{\tilde q, q} \right)$ then $v_n\to v$ in $ C([0,T];H^{1+\delta}_{p})$.
\end{lemma}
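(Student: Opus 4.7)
The plan is to exploit that both $v_n$ and $v$ are fixed points of integral operators of the form \eqref{eq: integral operator}, and to use the same contraction estimates from Lemma \ref{lm: contraction property of integral operator} together with the equivalent norm $\|\cdot\|^{(\rho)}_{\infty, H^{1+\delta}_p}$ defined in \eqref{eq: equivalent norm}. Writing $I^b_t$ for the integral operator with drift $b$, we have $v_n = I^{b_n}(v_n)$ and $v = I^b(v)$, so the difference splits as
\begin{align*}
v_n(t)-v(t) = & \int_0^t P_p(t-r)\bigl((b_n-b)(r)\cdot\nabla v_n(r)\bigr)\,\mathrm d r\\
& + \int_0^t P_p(t-r)\bigl(b(r)\cdot\nabla(v_n-v)(r)\bigr)\,\mathrm d r\\
& + \int_0^t P_p(t-r)(b_n-b)(r)\,\mathrm d r - \lambda\int_0^t P_p(t-r)(v_n-v)(r)\,\mathrm d r.
\end{align*}

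First I would obtain a uniform bound $\sup_n \|v_n\|^{(\rho)}_{\infty, H^{1+\delta}_p} \leq M$. Since $b_n \to b$ in $L^\infty([0,T];H^{-\beta}_{\tilde q,q})$, the norms $\|b_n\|_{\infty,H^{-\beta}_{\tilde q,q}}$ are bounded uniformly in $n$; applying the contraction estimate from Lemma \ref{lm: contraction property of integral operator} (item (i)) to the fixed point equation for $v_n$, and choosing $\rho$ large enough that the contraction constant $c(\rho)$ (which depends only on this uniform bound) is at most, say, $1/4$, yields the desired uniform bound.

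Next I would estimate the four terms above in the $\|\cdot\|^{(\rho)}_{\infty,H^{1+\delta}_p}$-norm. By the very computations of Lemma \ref{lm: contraction property of integral operator}, the second and fourth terms are bounded by $c(\rho)\|v_n-v\|^{(\rho)}_{\infty,H^{1+\delta}_p}$, a quantity absorbable on the left-hand side. The first and third terms are bounded by
\[
c(\rho)\bigl(M+1\bigr)\|b_n-b\|_{\infty,H^{-\beta}_{\tilde q,q}},
\]
using Lemma \ref{lm: pointwise product} (with Remark \ref{rem: pointwise product}) for the pointwise product in the first term and Lemma \ref{lm: semigroup from -beta to 1+delta} combined with Lemma \ref{lm: stima integrale con fz gamma} to handle the time integration against the singular kernel. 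Rearranging gives
\[
\|v_n-v\|^{(\rho)}_{\infty,H^{1+\delta}_p} \leq \frac{c(\rho)(M+1)}{1-2c(\rho)}\,\|b_n-b\|_{\infty,H^{-\beta}_{\tilde q,q}},
\]
which tends to $0$ as $n\to\infty$. Since the weighted norm is equivalent to the standard one on $C([0,T];H^{1+\delta}_p)$, this is the claim.

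The only slightly delicate step is the uniform-in-$n$ bound on $\|v_n\|$, which requires choosing $\rho$ large using only the limiting uniform bound on $\|b_n\|$ rather than a bound that depends on the specific $n$; once this is done, every other estimate is a verbatim repetition of the proofs already given in Lemma \ref{lm: contraction property of integral operator} and Theorem \ref{thm: contraction property of integral operator}.
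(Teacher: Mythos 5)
Your proof is correct, and the overall strategy — fixed-point/contraction estimates in the weighted norm $\|\cdot\|^{(\rho)}_{\infty,H^{1+\delta}_p}$, with the cross term split and the $\|v_n-v\|$ contribution absorbed on the left-hand side — is the same as the paper's. The one genuine difference is the algebraic split of the bilinear term: you write $b_n\cdot\nabla v_n - b\cdot\nabla v = (b_n-b)\cdot\nabla v_n + b\cdot\nabla(v_n-v)$, whereas the paper uses $b_n\cdot\nabla(v_n-v) + (b_n-b)\cdot\nabla v$. Your split makes the absorption coefficient $n$-independent (it involves $\|b\|_{\infty,H^{-\beta}_q}$), but the error term carries $\|v_n\|$, so you must first establish $\sup_n\|v_n\|^{(\rho)}_{\infty,H^{1+\delta}_p}<\infty$ as a separate preliminary step (this is exactly the content of Lemma~\ref{nuovo lemma}(i), which in the paper is proved afterwards and independently). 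The paper's split reverses this: the error coefficient involves the fixed quantity $\|v\|$, so no uniform bound on $v_n$ is needed; the price is that the absorption coefficient involves $\|b_n\|$, which the paper handles by noting $\|b_n\|_{\infty,H^{-\beta}_q}\le 2\|b\|_{\infty,H^{-\beta}_q}$ for $n\ge n_0$ and then choosing $\rho$ accordingly. Both routes are valid; yours is marginally longer because of the extra preliminary bound, but that bound is in any case needed elsewhere in the paper, so nothing is wasted.
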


\begin{proof}
Let $\lambda>0$ be fixed. We consider the integral equation \eqref{eq: mild solution to fwd PDE} on $H^{1+\delta}_p$  so  the semigroup will be denoted by $P_p$.  Observe that by Lemma \ref{lm: semigroup from -beta to 1+delta} we have
\begin{align*}
\| P_p(t-r) & \left( b_n(r)\cdot \nabla v_n(r) -  b(r)\cdot \nabla v(r) \right) \|_{H^{1+\delta}_p}  \\
\leq &c (t-r)^{-\frac{1+\delta+\beta}{2}}  \left\| b_n(r)\cdot \nabla v_n(r) -  b(r)\cdot \nabla v(r)  \right\|_{H^{-\beta}_p}\\
%\leq& c (t-r)^{-\frac{1+\delta+\beta}{2}} \\
% &\left\| b_n(r)\cdot \nabla v_n(r) -  b_n(r)\cdot \nabla v(r) + b_n(r)\cdot \nabla v(r)- b(r)\cdot \nabla v(r)  \right\|_{H^{-\beta}_p}\\
\leq & c (t-r)^{-\frac{1+\delta+\beta}{2}}  \bigg( \|b_n(r)\|_{H^{-\beta}_q} \| v_n(r) - v(r)\|_{H^{1+\delta}_p} \\
&+ \|b_n(r) - b(r)\|_{H^{-\beta}_q} \|v(r)\|_{H^{1+\delta}_p}  \bigg)\\
\leq & c (t-r)^{-\frac{1+\delta+\beta}{2}}  \bigg( \|b_n\|_{\infty, H^{-\beta}_q} \| v_n(r) - v(r)\|_{H^{1+\delta}_p} \\
&+ \|b_n - b\|_{\infty, H^{-\beta}_q} \|v(r)\|_{H^{1+\delta}_p}  \bigg),
\end{align*}
where the second to last line is bounded through Lemma \ref{lm: pointwise product}. Thus, by \eqref{eq: mild solution to fwd PDE}
\begin{align*}
\| &v- v_n\|^{(\rho)}_{\infty, H^{1+\delta}_p}= \sup_{0\leq t\leq T } \mathrm e^{-\rho t}\|v(t)-v_n(t)\|_{H^{1+\delta}_p} \\
\leq &  \sup_{0\leq t\leq T } \mathrm e^{-\rho t}\bigg ( \int_0^t   \left\| P_p(t-r) \left( b_n(r)\cdot \nabla v_n(r) - b(r)\cdot\nabla v(r) \right) \right\|_{H^{1+\delta}_p} \mathrm d r \\
+ &  \int_0^t   \left\| P_p(t-r) \big( b_n(r)  -  b(r)  + \lambda (v(r)- v_n(r))\big) \right\|_{H^{1+\delta}_p} \mathrm d r \bigg )\\
\leq &  \sup_{0\leq t\leq T } \mathrm e^{-\rho t} \bigg( c  \|b_n\|_{\infty, H^{-\beta}_q} \int_0^t  (t-r)^{-\frac{1+\delta+\beta}{2}} \| v_n(r) - v(r)\|_{H^{1+\delta}_p}  \mathrm d r \\
&+ c  \|b_n - b \|_{\infty, H^{-\beta}_q} \int_0^t  (t-r)^{-\frac{1+\delta+\beta}{2}} \|  v(r)\|_{H^{1+\delta}_p}  \mathrm d r \\
&+  c  \|b_n - b \|_{\infty, H^{-\beta}_q} \int_0^t    (t-r)^{-\frac{1+\delta+\beta}{2}}  \mathrm d r
+ c\lambda \int_0^t \|v(r)- v_n(r)\|_{H^{1+\delta}_p} \mathrm dr \bigg )\\
\leq & c  \|b_n\|_{\infty, H^{-\beta}_q}   \sup_{0\leq t\leq T }   \int_0^t  \mathrm e^{-\rho( t-r) } (t-r)^{-\frac{1+\delta+\beta}{2}}  \mathrm e^{-\rho r } \| v_n(r) - v(r)\|_{H^{1+\delta}_p}  \mathrm d r \\
&+ c  \|b_n - b \|_{\infty, H^{-\beta}_q} \cdot\\
  &\phantom{space}\cdot\sup_{0\leq t\leq T } \int_0^t \mathrm e^{-\rho( t-r) }  (t-r)^{-\frac{1+\delta+\beta}{2}}   \mathrm e^{-\rho r } \left( \|v(r)\|_{H^{1+\delta}_p}+1\right)  \mathrm d r\\
&+ c  \lambda   \sup_{0\leq t\leq T }   \int_0^t  \mathrm e^{-\rho( t-r) } \mathrm e^{-\rho r } \| v_n(r) - v(r)\|_{H^{1+\delta}_p}  \mathrm d r,
\end{align*}
where we have used again Lemma \ref{lm: semigroup from -beta to 1+delta}. Consequently
\begin{align*}
 \| v- v_n\|^{(\rho)}_{\infty, H^{1+\delta}_p} \leq & c\|b_n\|_{\infty, H^{-\beta}_q}  \| v_n - v \|^{(\rho)}_{\infty, H^{1+\delta}_p} \rho^{\frac{\delta+\beta-1}{2}} \\
 &+ c  \|b_n - b \|_{\infty, H^{-\beta}_q} \left( \|v\|^{(\rho)}_{\infty, H^{1+\delta}_p}+ 1 \right ) \rho^{\frac{\delta+\beta-1}{2}}\\
 &+ c \lambda   \| v_n - v \|^{(\rho)}_{\infty, H^{1+\delta}_p} \rho^{-1} .
\end{align*}
The last bound is due to Lemma \ref{lm: stima integrale con fz gamma}. Since $\|b_n\|_{\infty, H^{-\beta}_q}\to \|b\|_{\infty, H^{-\beta}_q}$ then there exists $n_0\in \mathbb N$ such that $\|b_n\|_{\infty, H^{-\beta}_q}\leq 2 \|b\|_{\infty, H^{-\beta}_q}$ for all $n\geq n_0$. Choose now $\rho$ big enough in order to have
\[
1 - c \left(\|b\|_{\infty,H^{-\beta}_q} \rho^{\frac{\delta+\beta-1}{2}} + \lambda \rho^{-1} \right)>0
\]
 and then we have for each $n\geq n_0$
\[
 \| v- v_n\|^{(\rho)}_{\infty, H^{1+\delta}_p} \leq  c \frac{\left( \|v\|_{\infty, H^{1+\delta}_p}^{(\rho)} +1\right) \rho^{\frac{\delta+\beta-1}{2}} }{1 - c \left(\|b\|_{\infty,H^{-\beta}_q} \rho^{\frac{\delta+\beta-1}{2}} + \lambda \rho^{-1} \right) }  \|b_n - b \|_{\infty,H^{-\beta}_q},
\]
which concludes the proof.
\end{proof}

\begin{lemma} \label{nuovo lemma}
\begin{itemize}
 \item[(i)]  Let $\|b_n\|_{\infty, H^{-\beta}_{\tilde q,q}} \leq c \|b\|_{\infty, H^{-\beta}_{\tilde q,q}} $ for a constant $c$ not depending on $n$. Then there exists a constant $C>0$ such that
\[
\sup_{(t,x)\in [0,T]\times \mathbb R^d} \left (\left\vert u_{n}(t,x) \right\vert+\left\vert \nabla u_{n}(t,x)\right\vert \right) \leq C,
\]
for every $n\in\mathbb{N}$.
\item[(ii)] There exists $\lambda\geq0$ such that
\begin{equation}\label{eq: uniform bound nabla u}
\sup_{(t,x)\in [0,T]\times \mathbb R^d} \left\vert \nabla u_{n}(t,x)\right\vert \leq\frac{1}{2}
\end{equation}
and
\[
\sup_{(t,y)\in [0,T]\times \mathbb R^d} \left\vert \nabla\psi_n(t,y)\right\vert \leq2,
\]
for every $n\in\mathbb{N}$ and similarly for $\nabla u$ and $\nabla
\psi$.
\item[(iii)] If $b_{n}\to b$ in $L^\infty ([0,T];H^{-\beta}_{\tilde q,q})$, then we have
$u_{n}\rightarrow u$, $\nabla u_{n}\rightarrow\nabla u$, $\varphi
_{n}\rightarrow\varphi$ and $\psi_n\rightarrow\psi$
uniformly on $\left[  0,T\right]  \times\mathbb{R}^{d}$.

\end{itemize}
\end{lemma}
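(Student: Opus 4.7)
The strategy is to apply Lemma \ref{lm: bound for gradient u}, Lemma \ref{lm: phi is invertible} and Lemma \ref{lm: continuity of u wrt approximation} to the approximating sequence, tracking that each bound depends on $b$ only through $\|b\|_{\infty, H^{-\beta}_{\tilde q,q}}$.

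For part (i), I would first observe that by the fractional Morrey inequality in Theorem \ref{thm: fractional Morrey ineq}, the $C^{1,\alpha}$-norm of $u_n(t,\cdot)$ is controlled by $\|u_n(t)\|_{H^{1+\delta}_p}$ for any $(\delta,p)\in K(\beta,q)$. Inspecting the estimates \eqref{eq: bound for v}--\eqref{eq: bound for grad v} in Lemma \ref{lm: bound for gradient u} (applied to $u_n = v_n(T-\cdot,\cdot)$), the resulting constant depends on $b_n$ only through $\|b_n\|_{\infty, H^{-\beta}_{\tilde q,q}}$. The hypothesis $\|b_n\|_{\infty, H^{-\beta}_{\tilde q,q}} \leq c\|b\|_{\infty, H^{-\beta}_{\tilde q,q}}$ therefore yields a constant $C$ independent of $n$.

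For part (ii), I would use the explicit dependence on $\lambda$ in \eqref{eq: bound for grad v}: since the right-hand side tends to $0$ as $\lambda\to\infty$ and depends on $n$ only through the uniformly bounded quantity $\|b_n\|_{\infty, H^{-\beta}_{\tilde q,q}}$, one can pick $\lambda$ large enough (independently of $n$) so that $\sup|\nabla u_n(t,x)| \leq 1/2$ uniformly in $n$. For the second bound, I would argue exactly as in Step 2 of the proof of Lemma \ref{lm: phi is invertible}: $\nabla\varphi_n = \mathrm{I}_d + \nabla u_n$ satisfies $|\varphi_n(x_1) - \varphi_n(x_2)| \geq \tfrac{1}{2}|x_1-x_2|$, so $\psi_n$ is Lipschitz with constant $2$, hence $|\nabla\psi_n| \leq 2$ a.e.; the corresponding statement for $u,\psi$ follows by setting $b_n\equiv b$.

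For part (iii), convergence $b_n\to b$ in $L^\infty([0,T]; H^{-\beta}_{\tilde q,q})$ implies (in particular) that $\|b_n\|$ is bounded, so parts (i)--(ii) apply. Lemma \ref{lm: continuity of u wrt approximation} gives $v_n\to v$ in $C([0,T];H^{1+\delta}_p)$, and the fractional Morrey inequality \eqref{eq: fractional Morrey ineq} upgrades this to convergence in $C([0,T];C^{1,\alpha})$, yielding uniform convergence of $u_n$ and $\nabla u_n$ on $[0,T]\times\mathbb{R}^d$. Consequently $\varphi_n = \mathrm{Id} + u_n \to \varphi$ uniformly. The only genuinely delicate point is the uniform convergence of the inverses $\psi_n\to\psi$; to handle this I would use the identities $\varphi_n(\psi_n(t,y))=y=\varphi(\psi(t,y))$ together with the lower bound $|\varphi(x_1)-\varphi(x_2)| \geq \tfrac{1}{2}|x_1-x_2|$ from part (ii), to obtain
\[
\tfrac{1}{2}|\psi_n(t,y)-\psi(t,y)| \leq |\varphi(\psi_n(t,y))-\varphi_n(\psi_n(t,y))| = |u(t,\psi_n(t,y)) - u_n(t,\psi_n(t,y))|,
\]
whose right-hand side is bounded by $\sup_{(t,x)}|u(t,x)-u_n(t,x)|$ and therefore tends to $0$ uniformly in $(t,y)$ by the already established convergence of $u_n\to u$. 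This is the main obstacle, and the inverse-via-contraction structure of $\varphi$ is exactly what makes it routine.
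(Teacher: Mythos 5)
Your plan is correct and follows essentially the same route as the paper's proof. For part (i), the paper does not simply quote \eqref{eq: bound for v}--\eqref{eq: bound for grad v} but re-runs the fixed-point estimate from Lemma \ref{lm: continuity of u wrt approximation} with $v_n$ in place of $v_n-v$, obtaining an explicit bound
$\|v_n\|^{(\rho)}_{\infty, H^{1+\delta}_p}\leq C(\rho,\lambda,\|b\|_{\infty,H^{-\beta}_{\tilde q,q}})$
and then applies the Morrey embedding; your phrase ``inspecting the estimates'' points at the same fact but leaves the explicit derivation implicit, which is the only place where the paper is more careful than you are. Parts (ii) match. For part (iii) your argument for $\psi_n\to\psi$ uses the lower bound $|\varphi(x_1)-\varphi(x_2)|\geq\tfrac12|x_1-x_2|$ evaluated at $\psi_n(t,y)$ and $\psi(t,y)$, whereas the paper writes $|x_n-x|=|u_n(t,x_n)-u(t,x)|$ and inserts the intermediate term $u_n(t,x)$ to exploit the Lipschitz bound $|\nabla u_n|\le\tfrac12$; these are two phrasings of the identical contraction estimate and yield the same final bound $|\psi_n-\psi|\le 2\sup|u_n-u|$, so the difference is purely cosmetic.
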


\begin{proof}
(i) The proof has the same structure as the proof of Lemma \ref{lm: continuity of u wrt approximation}, but slightly simplified as the difference $v_n -v$ is replaced with $v_n$. In the following bounds the constant $c$ may change from line to line and one gets
\begin{align*}
\|v_n\|^{(\rho)}_{\infty, H^{1+\delta}_p}\leq& c \|b_n\|_{\infty, H^{-\beta}_{\tilde q, q}} \|v_n\|^{(\rho)}_{\infty, H^{1+\delta}_p} \rho^{\frac{\delta+\beta-1}{2}}  \\
& +  c \|b_n\|_{\infty, H^{-\beta}_{q}} \rho^{\frac{\delta+\beta-1}{2}}  + c \lambda \|v_n\|^{(\rho)}_{\infty, H^{1+\delta}_p} \rho^{-1}  \\
\leq& c \|b\|_{\infty, H^{-\beta}_{\tilde q, q}} \|v_n\|^{(\rho)}_{\infty, H^{1+\delta}_p} \rho^{\frac{\delta+\beta-1}{2}}  \\
& +  c \|b\|_{\infty, H^{-\beta}_{\tilde q, q}} \rho^{\frac{\delta+\beta-1}{2}} + c \lambda \|v_n\|^{(\rho)}_{\infty, H^{1+\delta}_p} \rho^{-1} ,
\end{align*}
where the latter bound holds thanks to the assumption on the $b_n$'s. Now we choose $\rho$ large enough such that 
\[
1- c\left( \|b\|_{\infty, H^{-\beta}_{\tilde q, q}} \rho^{\frac{\delta+\beta-1}{2}} + \rho^{-1}  \right) >0
\]
and get for every $n\in \mathbb N$ 
\[
\|v_n\|^{(\rho)}_{\infty, H^{1+\delta}_p} \leq \frac{c \rho^{\frac{\delta+\beta-1}{2}} }{1- c \left( \|b\|_{\infty, H^{-\beta}_{\tilde q, q}} \rho^{\frac{\delta+\beta-1}{2}} + \rho^{-1}  \right)} \|b\|_{\infty, H^{-\beta}_{\tilde q, q}}=: C.
\]

(ii) The uniform bound \eqref{eq: uniform bound nabla u} on $\nabla u_n$ is obtained simply applying Lemma \ref{lm: bound for gradient u} to $u_{n}$ in place of $u_\lambda$. For what concerns the second bound involving $\nabla \psi_n$ we observe that $\nabla \varphi_n (t,x)$ is non-degenerate uniformly in $t,x,n$ since for each $\xi\in \mathbb R^d$ we have
\[
\vert\nabla \varphi_n(t,x) \cdot \xi\vert \geq |\xi| -|\nabla u_n (t,x)\cdot \xi| \geq \frac12 |\xi| ,
\]
having used \eqref{eq: uniform bound nabla u} for the latter inequality. This implies that $\nabla \psi_n (t,x)$ is well-defined for each $(t,x)$. Further note that by  Lemma \ref{lm: phi is invertible} we have that $\psi_n(t, \cdot)$ is Lipschitz with constant $k=2$, uniformly in $t$ and $n$, and this now implies the claim.

(iii) We know that $u_{n}\rightarrow u$ in $C\left(  \left[  0,T\right]
;H^{1+\delta}_p\right)  $, namely%
\[
\lim_{n\rightarrow\infty}\sup_{t\in\left[  0,T\right]  }\left\Vert
u_{n}(t)-u(t) \right\Vert _{H^{1+\delta}_p}=0.
\]
By Sobolev embedding theorem, there is a constant $C>0$ such that%
\begin{align*}
&\sup_{t\in\left[  0,T\right]  }\left(  \sup_{x\in \mathbb R^d}\left\vert u_{n}(t,x)-u(t,x)\right\vert 
+\sup_{x\in \mathbb R^n}\left\vert \nabla u_{n}(t,x)-\nabla u(t,x)\right\vert \right)  \\
&\leq C\sup_{t\in\left[  0,T\right]  }\left\Vert u_{n}(t)-u(t)\right\Vert _{H^{1+\delta}_p}.
\end{align*}
Hence $u_{n}\rightarrow u$ and $\nabla u_{n}\rightarrow\nabla u$, uniformly on
$\left[  0,T\right]  \times\mathbb{R}^{d}$. Since $\varphi_{n}-\varphi
=u_{n}-u$, we also have that $\varphi_{n}\rightarrow\varphi$ uniformly on
$\left[  0,T\right]  \times\mathbb{R}^{d}$. Let us prove the uniform
convergence of $\psi_n$ to $\psi$.

Given $y\in\mathbb{R}^{d}$, we know that for every $t\in\left[  0,T\right]  $
and $n\in\mathbb{N}$ there exist $x\left(  t\right)  ,x_{n}\left(  t\right)
\in\mathbb{R}^{d}$ such that
\begin{align*}
x\left(  t\right)  +u\left(  t,x\left(  t\right)  \right)    & =y\\
x_{n}\left(  t\right)  +u_{n}\left(  t,x_{n}\left(  t\right)  \right)    & =y
\end{align*}
and we have called $x\left(  t\right)  $ and $x_{n}\left(  t\right)  $ by
$\psi\left(  t,y\right)  $ and $\psi_n\left(  t,y\right)  $
respectively. Then%
\begin{align*}
\left\vert x_{n}\left(  t\right)  -x\left(  t\right)  \right\vert  
=&\left\vert u_{n}\left(  t,x_{n}\left(  t\right)  \right)  -u\left(
t,x\left(  t\right)  \right)  \right\vert \\
\leq&\left\vert u_{n}\left(  t,x_{n}\left(  t\right)  \right)  -u_{n}\left(
t,x\left(  t\right)  \right)  \right\vert +\left\vert u_{n}\left(  t,x\left(
t\right)  \right)  -u\left(  t,x\left(  t\right)  \right)  \right\vert \\
\leq &\sup_{(t,x) \in [0,T]\times \mathbb R^d}\left\vert \nabla u_{n}(t,x)\right\vert \left\vert x_{n}\left(
t\right)  -x\left(  t\right)  \right\vert \\
&+\sup_{(t,x) \in [0,T]\times \mathbb R^d}\left\vert u_{n}(t,x)-u(t,x)\right\vert.
\end{align*}
Since $\sup_{(t,x) \in [0,T]\times \mathbb R^d}\left\vert \nabla u_{n}(t,x)\right\vert \leq\frac{1}{2}$, we deduce
\[
\left\vert x_{n}\left(  t\right)  -x\left(  t\right)  \right\vert
\leq2 \sup_{(t,x) \in [0,T]\times \mathbb R^d} \left\vert u_{n}(t,x)-u(t,x)\right\vert, 
\]
 namely%
\[
\left\vert \psi_n\left(  t,y\right)  -\psi\left(
t,y\right)  \right\vert \leq2 \sup_{(t,x) \in [0,T]\times \mathbb R^d} \left\vert u_{n}(t,x)-u(t,x)\right\vert
\]
which implies that $\psi_n\rightarrow\psi$ uniformly on
$\left[  0,T\right]  \times\mathbb{R}^{d}$.
\end{proof}

\section{The virtual solution}\label{SVirtual}
From now on, we fix  $\lambda$ and $\rho$ big enough so that Theorem \ref{thm: contraction property of integral operator} and Lemma \ref{lm: phi is invertible} hold true. As usual, the drift $b$ is chosen according to Assumption \ref{assumption}.

\subsection{Heuristics and motivation}

 We consider the following $d$-dimensional SDE
\begin{equation}\label{eq: SDE with singular drift}
 \mathrm dX_t= b(t, X_t)\mathrm dt + \mathrm d W_t,\quad   t\in [0,T],
\end{equation}
with initial condition $X_0= x $ where $b$ is a distribution. Formally, the integral form is
\begin{equation}\label{eq: SDE with singular drift - integral form}
 X_t = x +  \int_0^t b(s,X_s) \mathrm ds  +W_t,\quad   t\in [0,T],
\end{equation}
but the integral appearing on the right hand side is not well-defined, a
priori. We aim to give a meaning to this equation by introducing a suitable
notion of solution to the SDE \eqref{eq: SDE with singular drift}. Let $u$ be a mild solution to the PDE \eqref{eq: backwards PDE}:\ we shall make use of $u$ to define a notion of
solution to the SDE \eqref{eq: SDE with singular drift}.

By stochastic basis we mean a pentuple $\left(  \Omega,\mathcal{F},\mathbb{F},P,W\right)  $ where $\left(  \Omega,\mathcal{F},P\right)  $ is a
complete probability space with a completed filtration $\mathbb{F=}\left(
\mathcal{F}_{t}\right)  _{t\in\left[  0,T\right]  }$ and $W$ is a
$d$-dimensional $\mathbb{F}$-Brownian motion. In the spirit of weak solutions,
we cannot assume that $\mathbb{F}$ is the completed filtration associated to $W$.

\begin{definition}\label{def: virtual solution} 
Given $x\in\mathbb{R}^{d}$, a \emph{virtual solution} to the SDE \eqref{eq: SDE with singular drift} with initial value $x$ is a stochastic basis
$\left(  \Omega,\mathcal{F},\mathbb{F},P,W\right)  $ and a continuous
stochastic process $X:=(X_{t})_{t\in\lbrack0,T]}$ on it, $\mathbb{F}$-adapted,
such that the integral equation
\begin{equation}\label{eq: virtual solution}
X_{t}=x+u(0,x)-u(t,X_{t})+(\lambda+1)\int_{0}^{t}u(s,X_{s})\mathrm{d}%
s+\int_{0}^{t} { ( \nabla u(s,X_{s}) + \mathrm I_d)} \mathrm{d}W_{s},
\end{equation}
holds for all $t\in[0,T]$, with probability one. Here $\mathrm I_d$ denotes the $d\times d $ identity matrix and $u$ is the unique
mild solution to the PDE \eqref{eq: backwards PDE}. We shorten the notation and say
that $\left(  X,\mathbb{F}\right)  $ is a virtual solution when the previous
objects exist with the required properties.
\end{definition}

The motivation for this definition comes from two facts: i) the
not-properly-defined expression $\int_{0}^{t}b(s,X_{s})\mathrm{d}s$ does not
appear in the formulation; ii) when $b$ is a function with reasonable
regularity, classical solutions of the SDE \eqref{eq: SDE with singular drift} are also virtual solutions;
 this is the content of  Proposition \ref{pro: classical solutions are virtual solutions}, where we will illustrate this fact by considering two examples, one of which is the class of drifts investigated by \cite{Kry-Ro}. Similar arguments can be developed for the bounded measurable drift considered by \cite{V}.

\subsection{Existence and uniqueness of the virtual solution}
To find a virtual solution $\left(  X,\mathbb{F}\right)  $ to \eqref{eq: SDE with singular drift} we first make the following observation. Let us assume that $\left(  X,\mathbb{F}\right)  $  is a virtual solution of \eqref{eq: SDE with singular drift} with initial value $X_0=x$ and let us introduce the transformation $\varphi(t, x):= x + u(t,x)$ and set $Y_t=\varphi(t, X_t)$ for $  t\in [0,T]$. From \eqref{eq: virtual solution} we obtain
\begin{equation*}
 \varphi(t,X_t) = x + u(0,x) + (\lambda+1) \int_0^t u(s,X_s) \mathrm ds  +\int_0^t {(\nabla u(s,X_s) +\mathrm I_d)} \mathrm dW_s.
\end{equation*}
Since the function $\varphi(t, \cdot)$ is invertible  for all $t\in[0,T]$, we can consider the SDE
\begin{equation}\label{eq: SDE for Yt}
 Y_t = y + (\lambda+1) \int_0^t u(s,\psi(s, Y_s)) \mathrm ds  +\int_0^t {(\nabla u(s,\psi(s, Y_s)) + \mathrm I_d)} \mathrm dW_s,
\end{equation}
for $t\in[0,T]$, where $y=x+u(0,x)$. Hence $\left(  Y,\mathbb{F}\right)  $ where $Y:=(Y_t)_{t\in[0,T]}$, is a solution of \eqref{eq: SDE for Yt} with initial value $y\in\mathbb R$. Conversely, if  $\left(  Y,\mathbb{F}\right)  $ is the solution of \eqref{eq: SDE for Yt} with initial value $y\in\mathbb R$, then $\left(  X,\mathbb{F}\right)  $ defined by  \[X_t = \psi(t, Y_t), \quad   t\in [0,T],\] will give us the virtual solution of the SDE \eqref{eq: SDE with singular drift} with distributional drift and with initial value $x=\psi(0,y)$.

As mentioned above, to gain a better understanding of the concept of virtual solution, we first compare it to some classical solutions. For example let us consider the class of drifts investigated by \cite{Kry-Ro}.  Let $b$ be a measurable function $b:\left[  0,T\right]
\times\mathbb{R}^{d}\rightarrow\mathbb{R}^{d}$ such that
\[
\int_{0}^{T}\left(  \int_{\mathbb{R}^{d}}\left\vert b\left(  t,x\right)
\right\vert ^{p}dx\right)  ^{q/p}\mathrm d t<\infty,
\]
(we say that $b\in L_{t}^{q}\left(  L_{x}^{p}\right)  $) for some $p,q\geq2$
such that
\[
\frac{d}{p}+\frac{2}{q}<1.
\]
Under this assumption, there exists a strong solution $\left(  X,\mathbb{F}\right)  $ to the SDE
\eqref{eq: SDE with singular drift} and it is pathwise unique, see \cite{Kry-Ro}.

\begin{proposition}\label{pro: classical solutions are virtual solutions} Suppose that one of the following conditions holds:  
\begin{itemize}
\item[(i)] $b \in C\left(  \left[  0,T\right]  ;C_{b}^{1}\left(  \mathbb{R}%
^{d};\mathbb{R}^{d}\right)  \right)  $ (bounded with bounded first derivatives);
\item[(ii)]  $b\in  L_{t}^{q}\left(  L_{x}^{p}\right)  $. 
\end{itemize}
Then the classical solution  $\left(  X,\mathbb{F}\right)  $ to the SDE \eqref{eq: SDE with singular drift} is also a virtual solution.
\end{proposition}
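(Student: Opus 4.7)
The plan is to apply It\^o's formula to the process $u(t, X_t)$, where $u$ is the mild solution of the backward PDE \eqref{eq: backwards PDE}, and then combine the PDE identity with the classical integral equation for $X$ in order to rearrange the expansion into \eqref{eq: virtual solution}.

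First I treat case (i). Assume, possibly after a mild technical assumption ensuring that $b$ also fits the framework of Assumption \ref{assumption}, that the unique mild solution $u$ from Theorem \ref{thm: contraction property of integral operator} exists. Since $b$ is smooth in $x$, standard parabolic regularity upgrades $u$ to a classical $C^{1,2}$ solution of \eqref{eq: backwards PDE}; the mild and classical solutions agree by the uniqueness statement of Theorem \ref{thm: uniqueness of K solution} together with Lemma \ref{lm: mild solutions coincide in S'}. Applying It\^o's formula componentwise to $u(t, X_t)$ and replacing $\partial_t u + \tfrac12 \Delta u + b\cdot\nabla u$ by $(\lambda+1)u - b$ from the PDE gives
\begin{equation*}
u(t, X_t) = u(0, x) + (\lambda+1)\int_0^t u(s, X_s)\,\mathrm{d}s - \int_0^t b(s, X_s)\,\mathrm{d}s + \int_0^t \nabla u(s, X_s)\,\mathrm{d}W_s.
\end{equation*}
Using the classical SDE \eqref{eq: SDE with singular drift - integral form} to substitute $\int_0^t b(s, X_s)\,\mathrm{d}s = X_t - x - W_t$ and rearranging produces exactly \eqref{eq: virtual solution}.

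For case (ii) I argue by approximation. Let $b_n := b\star\phi_n$ be a (space--time) mollification of $b$; each $b_n$ is smooth, so case (i) applies to the classical solution $X^n$ of the SDE driven by $b_n$. By the Krylov--R\"ockner stability theory, $X^n \to X$ uniformly on $[0,T]$ in probability. Simultaneously, for parameters chosen so that the $L^q_t L^p_x$ class embeds into $L^\infty([0,T]; H^{-\beta}_{\tilde q, q})$, one has $b_n \to b$ in that Sobolev norm, and Lemma \ref{nuovo lemma}(iii) yields uniform convergence of $u_n$ and $\nabla u_n$ to $u$ and $\nabla u$ on $[0,T]\times\mathbb R^d$; moreover Lemma \ref{nuovo lemma}(ii) furnishes a uniform $L^\infty$-bound on $\nabla u_n$. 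One can therefore pass to the limit in the virtual-solution identity for $(X^n, u_n)$ obtained in step~(i): the Lebesgue integrals converge by bounded convergence combined with the joint continuity of $u$ (Lemma \ref{lm: u jointly continuous}), the stochastic integral converges in probability by the standard stability theorem for It\^o integrals (uniform bound plus uniform convergence of the integrand along $X^n \to X$), and the boundary term $u_n(t, X^n_t) \to u(t, X_t)$ likewise.

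The principal technical obstacle lies in case (ii): one must reconcile the two regularity frameworks, namely verify that the Krylov--R\"ockner class $L^q_t L^p_x$ with $d/p + 2/q < 1$ is contained in, or can be approximated by elements of, the Sobolev scale $L^\infty_t H^{-\beta}_{\tilde q, q}$ underlying Assumption \ref{assumption}, so that $u$ and the approximating $u_n$ are simultaneously well-defined and that the mollification $b_n \to b$ occurs in the norms needed both by SDE stability and by Lemma \ref{nuovo lemma}. Once this compatibility is established, the It\^o-calculus identity in case (i) and the bounded/dominated convergence passage in case (ii) are straightforward.
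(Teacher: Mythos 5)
Your case~(i) matches the paper's argument in all essentials: apply It\^o's formula to the classical $C^{1,2}$ solution $u$ of \eqref{eq: backwards PDE}, use the PDE to replace $\partial_t u + \tfrac12\Delta u + b\cdot\nabla u$ by $(\lambda+1)u - b$, and substitute $\int_0^t b(s,X_s)\,\mathrm{d}s = X_t - x - W_t$ to arrive at \eqref{eq: virtual solution}. (The paper applies It\^o to $\varphi(t,X_t) = X_t + u(t,X_t)$ rather than to $u(t,X_t)$, but this is cosmetic.)

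Your case~(ii) takes a genuinely different route and leaves a real gap. The paper does \emph{not} argue by approximation: it invokes the regularity theory of Krylov and R\"ockner \cite{Kry-Ro} directly, which shows that for $b\in L^q_t(L^p_x)$ with $d/p+2/q<1$ the PDE solution $u$ has first and second spatial derivatives in $L^q_t(L^p_x)$, continuous and bounded first derivatives, and --- crucially --- that \emph{It\^o's formula extends} to such $u$. With that extended It\^o formula one computes $\mathrm{d}u(t,X_t)$ in one stroke and rearranges exactly as in case~(i), with no mollification, no uniform bounds on $u_n,\nabla u_n$, and no passage to the limit in a stochastic integral. Your proposal instead mollifies $b$, applies case~(i) to each $b_n$, and tries to pass to the limit using Krylov--R\"ockner SDE stability on the one hand and Lemma \ref{nuovo lemma} on the other. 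The difficulty you yourself flag --- reconciling the $L^q_t(L^p_x)$ class with the $L^\infty_t H^{-\beta}_{\tilde q,q}$ scale so that both limit theorems apply simultaneously to the same $b_n\to b$ --- is not a cosmetic technicality: it is precisely what your argument needs and does not supply, and there is an additional subtlety that time-mollification of an $L^\infty_t$-valued map need not converge in $L^\infty_t$ norm without further continuity in time, so even the hypothesis of Lemma \ref{nuovo lemma}(iii) is not obviously available. The paper's direct use of the extended It\^o formula sidesteps all of this, which is why it is the cleaner and safer route; if you want to salvage the approximation argument you would need to establish the embedding/compatibility between the two drift classes and justify the time-mollification convergence, neither of which is addressed.
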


\begin{proof}

Suppose condition (i) holds. Let $u$ be the unique classical solution of equation \eqref{eq: backwards PDE};\ $u$ is (at least) of class $C^{1,2}\left(  \left[  0,T\right]  \times\mathbb{R}^{d};\mathbb{R}^{d}\right)$. Since $\varphi\left(  t,x\right)
=x+u\left(  t,x\right)   $ then  $\varphi\in C^{1,2}\left(  \left[
0,T\right]  \times\mathbb{R}^{d};\mathbb{R}^{d}\right)  $ as well. Let $X$ be
the unique strong solution of equation \eqref{eq: SDE with singular drift} and let $Y_{t}=\varphi \left(  t,X_{t}\right)  $. By It\^{o}'s formula, $Y$
satisfies equation \eqref{eq: SDE for Yt}  and thus $X_{t} =\psi\left(  t,Y_{t}\right)  $ is also a virtual solution. 

 Suppose now that condition (ii) holds.  The solution $u$ of the PDE \eqref{eq: backwards PDE}, when $b$ is of class $L_{t}^{q}\left(  L_{x}^{p}\right)  $ with the assumed constraints on $\left(q,p\right)  $, belongs to $L_{t}^{q}\left(  L_{x}^{p}\right)  $ with its first
and second spatial derivatives, the first spatial derivatives are continuous
and bounded, and other regularity properties hold; see \cite{Kry-Ro}. In
particular, it is proved there that It\^{o}'s formula extends to such functions
$u$ and we get
\begin{align*}
\mathrm{d}u(t,X_{t})= &  \left(  \frac{\partial u}{\partial t}(t,X_{t}%
)+\frac{1}{2}\Delta u(t,X_{t})+\nabla u(t,X_{t})b(t,X_{t})\right)
\mathrm{d}t\\
&  +\nabla u(t,X_{t})\mathrm{d}W_{t}\\
= &  (\lambda+1)u(t,X_{t})\mathrm{d}t-b(t,X_{t})\mathrm{d}t+\nabla
u(t,X_{t})\mathrm{d}W_{t}.
\end{align*}
The integral form of the last equation
\[
u(t,X_{t})=u(0,x)+(\lambda+1)\int_{0}^{t}u(s,X_{s})\mathrm{d}s-\int_{0}%
^{t}b(s,X_{s})\mathrm{d}s+\int_{0}^{t}\nabla u(s,X_{s})\mathrm{d}W_{s},
\]
allows us to evaluate the singular term $\int_{0}^{t}b(s,X_{s})\mathrm{d}s$
as
\[
\int_{0}^{t}b(s,X_{s})\mathrm{d}s=u(0,x)-u(t,X_{t})+(\lambda+1)\int_{0}%
^{t}u(s,X_{s})\mathrm{d}s+\int_{0}^{t}\nabla u(s,X_{s})\mathrm{d}W_{s}.
\]
This proves identity (\ref{eq: virtual solution}) and thus $\left(  X,\mathbb{F}\right)  $ is a virtual solution.
\end{proof}

\begin{proposition}\label{pr: existence and uniqueness weak solution Yt}
For every initial condition $y\in\mathbb R$ there exists a unique weak solution $\left(  Y,\mathbb{F}\right)  $ to the SDE \eqref{eq: SDE for Yt} with initial value $y$.
\end{proposition}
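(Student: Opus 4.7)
The strategy is to recast the equation \eqref{eq: SDE for Yt} as a standard It\^o SDE
\[
 dY_t = B(t,Y_t)\,dt + \Sigma(t,Y_t)\,dW_t, \qquad Y_0 = y,
\]
with $B(t,y) := (\lambda+1)\,u(t,\psi(t,y))$ and $\Sigma(t,y) := \mathrm I_d + \nabla u(t,\psi(t,y))$, and then invoke the classical theory for weakly well-posed SDEs with bounded, continuous, uniformly non-degenerate coefficients.

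To this end, I would first collect the regularity properties of $(B,\Sigma)$ that are already packaged in the lemmas of Section \ref{sc: the komogorov PDE}. \emph{Boundedness}: Lemma \ref{lm: bound for gradient u} yields $\sup_{(t,x)}|u(t,x)| \leq C$ and, for $\lambda$ chosen as in that lemma (cf.\ Lemma \ref{lm: phi is invertible}), $\sup_{(t,x)}|\nabla u(t,x)| \leq \tfrac12$, so $B$ and $\Sigma$ are uniformly bounded with $\|\Sigma\|_{\mathrm{op}} \leq \tfrac32$. \emph{Joint continuity}: by Lemma \ref{lm: u jointly continuous} the maps $u$ and $\nabla u$ are jointly continuous in $(t,x)$, and by Lemma \ref{lm: phi is invertible} so is $\psi$; composing, $B$ and $\Sigma$ are jointly continuous on $[0,T]\times\mathbb R^d$. \emph{Uniform ellipticity}: for every $\xi\in\mathbb R^d$ and every $(t,y)$,
\[
 |\Sigma(t,y)\,\xi| \geq |\xi| - |\nabla u(t,\psi(t,y))\,\xi| \geq \tfrac12|\xi|,
\]
so the smallest singular value of $\Sigma$ is bounded below by $1/2$, whence $\Sigma\Sigma^\top \succeq \tfrac14\mathrm I_d$ uniformly in $(t,y)$.

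With these three properties in hand, existence and uniqueness in law of a solution to the above SDE (and hence to \eqref{eq: SDE for Yt}) follow from the classical Stroock--Varadhan theorem on well-posedness of the martingale problem for bounded, continuous coefficients with a bounded, uniformly non-degenerate diffusion matrix (see e.g.\ Stroock--Varadhan, Theorem 7.2.1, or Ikeda--Watanabe, Chapter IV). I do not expect a substantive obstacle: the delicate analytic work has already been carried out in constructing $u$, in pinning down the bound $|\nabla u|\leq \tfrac12$, and in establishing the Lipschitz character of $\psi$; once these are available, the proposition is a direct application of standard SDE theory to the regularized coefficients $(B,\Sigma)$. The only point worth being slightly careful about is to verify that all the required bounds on $(B,\Sigma)$ are uniform in $(t,y)\in[0,T]\times\mathbb R^d$, which is indeed the case by the preceding lemmas.
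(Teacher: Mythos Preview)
Your proposal is correct and follows essentially the same route as the paper: the paper too rewrites \eqref{eq: SDE for Yt} with drift $\mu(t,y)=(\lambda+1)u(t,\psi(t,y))$ and diffusion $\sigma(t,y)=\nabla u(t,\psi(t,y))+\mathrm I_d$, verifies joint continuity via Lemmas \ref{lm: u jointly continuous} and \ref{lm: phi is invertible}, boundedness via Lemma \ref{lm: bound for gradient u}, uniform non-degeneracy via the same inequality $|\sigma^T\xi|\geq|\xi|-|\nabla u\cdot\xi|\geq\tfrac12|\xi|$, and then invokes Stroock--Varadhan (Theorem 10.2.2 in \cite{sv}).
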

\begin{proof}
We know that $u, \nabla u$ and $\psi$ are jointly continuous in time and space %and $u$ and $\psi$ are Lipschitz continuous in space with Lipschitz constants $k=1/2$ and $k=2$ respectively, 

by Lemma \ref{lm: u jointly continuous} and Lemma \ref{lm: phi is invertible}. This implies that the drift of $Y$
\[
\mu(t,y):= (\lambda+1)  u(t,\psi(t, y))
\]
%\marginpar{Attenzione, forse basta dire limitata}
and the diffusion coefficient
\[
\sigma(t,y):= \nabla u(t,\psi(t, y)) + \mathrm I_d = \nabla\varphi (t,\psi(t, y))
\]
are continuous. Since by  Lemma \ref{lm: bound for gradient u} the function $u$ and its gradient are uniformly bounded, we also have that $\mu $ and $\sigma $ are uniformly bounded. 
 Moreover $\sigma$ is uniformly non-degenerate since for all $x,\xi\in \mathbb R^d$ and $t\in [0,T]$
\begin{align*}
&|\sigma^T(t,x)\xi| = |\xi+ \xi \cdot \nabla u(t, \psi(t,y))| \\
&\geq |\xi|- |\xi \cdot \nabla u(t, \psi(t,y))| \geq \frac12 |\xi|
\end{align*}
by \eqref{eq: bound 1/2 for nabla u}.  Thus Theorem 10.2.2 in \cite{sv} yields existence and uniqueness of a weak solution of SDE \eqref{eq: SDE for Yt} for every initial value $y\in \mathbb R$.
\end{proof}

\begin{theorem}\label{thm: existence and uniqueness virtual solution Xt}
For every $x\in\mathbb R$ there exists a unique in law virtual solution $\left(  X,\mathbb{F}\right)  $ to the SDE \eqref{eq: SDE with singular drift} with initial value $X_0=x$ given by $X_t = \psi(t, Y_t), t\in [0,T]$, where $\left(  Y,\mathbb{F}\right)  $ is the solution with initial value $y=x+u(0,x)$ given in Proposition \ref{pr: existence and uniqueness weak solution Yt}.
\end{theorem}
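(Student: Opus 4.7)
The strategy follows directly from the heuristic discussion preceding the statement, so the plan is to convert the problem to the well-posed SDE \eqref{eq: SDE for Yt} and then transfer existence and uniqueness through the deterministic homeomorphism $\varphi_t=x+u(t,x)$, $\psi_t=\varphi_t^{-1}$.

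For existence, given $x\in\mathbb R^d$, the plan is to set $y:=x+u(0,x)$ and invoke Proposition \ref{pr: existence and uniqueness weak solution Yt} to obtain a weak solution $(Y,\mathbb F)$ of \eqref{eq: SDE for Yt} with $Y_0=y$ on some stochastic basis $(\Omega,\mathcal F,\mathbb F,P,W)$. I then define
\[
X_t:=\psi(t,Y_t),\qquad t\in[0,T].
\]
Since $Y$ is continuous and $\mathbb F$-adapted and $\psi$ is jointly continuous by Lemma \ref{lm: phi is invertible}, $X$ is continuous and $\mathbb F$-adapted. Using the identity $\varphi(t,X_t)=Y_t$, i.e.\ $X_t+u(t,X_t)=Y_t$, and substituting $\psi(s,Y_s)=X_s$ into the defining integral equation \eqref{eq: SDE for Yt}, I obtain
\[
X_t+u(t,X_t)=x+u(0,x)+(\lambda+1)\int_0^t u(s,X_s)\mathrm ds+\int_0^t(\nabla u(s,X_s)+\mathrm I_d)\mathrm dW_s,
\]
which, after moving $u(t,X_t)$ to the right, is exactly \eqref{eq: virtual solution}. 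Hence $(X,\mathbb F)$ is a virtual solution with $X_0=\psi(0,y)=x$.

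For uniqueness in law, I would run the same transformation backwards. Let $(X,\mathbb F)$ be any virtual solution of \eqref{eq: SDE with singular drift} with $X_0=x$, and set $Y_t:=\varphi(t,X_t)=X_t+u(t,X_t)$. Continuity and adaptedness of $Y$ are automatic. Rearranging \eqref{eq: virtual solution} and using $X_s=\psi(s,Y_s)$ (which holds since $\varphi_s$ is a bijection) yields
\[
Y_t=y+(\lambda+1)\int_0^t u(s,\psi(s,Y_s))\mathrm ds+\int_0^t(\nabla u(s,\psi(s,Y_s))+\mathrm I_d)\mathrm dW_s,
\]
with $y=x+u(0,x)$. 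Thus $(Y,\mathbb F)$ is a weak solution of \eqref{eq: SDE for Yt}, and by the uniqueness in law part of Proposition \ref{pr: existence and uniqueness weak solution Yt} the law of $Y$ is determined by $y$. Since $X_t=\psi(t,Y_t)$ and $\psi$ is a deterministic continuous map, the law of $X$ on path space is also uniquely determined.

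There is no real obstacle: the bijectivity and joint continuity of $\varphi$ and $\psi$ (Lemmas \ref{lm: u jointly continuous}--\ref{lm: phi is invertible}), together with the boundedness of $u$ and the bound $\|\nabla u\|_\infty\leq 1/2$ from Lemma \ref{lm: bound for gradient u}, make both substitutions rigorous, and no stochastic-calculus manipulations (such as It\^o's formula) are needed beyond the direct algebraic rearrangement, since the virtual-solution equation was designed precisely to match the It\^o decomposition of $\varphi(t,X_t)$ one would have obtained formally. The only subtlety to check is that the stochastic integrals in the two equations are indistinguishable, which is immediate because $\nabla u(s,X_s)+\mathrm I_d=\nabla u(s,\psi(s,Y_s))+\mathrm I_d$ pathwise, so nothing is lost in the substitution.
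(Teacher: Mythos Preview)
Your proposal is correct and follows essentially the same approach as the paper: both use Proposition~\ref{pr: existence and uniqueness weak solution Yt} to produce $Y$, set $X_t=\psi(t,Y_t)$ for existence, and for uniqueness transform an arbitrary virtual solution back via $\varphi$ to a weak solution of \eqref{eq: SDE for Yt}, concluding by uniqueness in law there and the invertibility of $\varphi_t$. Your write-up is somewhat more explicit about adaptedness, continuity, and the pathwise identification of the integrands, but the argument is the same.
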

\begin{proof}
We shorten the notation in the proof and write $X$ and $Y$ in place of $\left(  X,\mathbb{F}\right)  $ and $\left(  Y,\mathbb{F}\right)  $ respectively.\\
\emph{Existence.} Let us fix the initial condition $x\in \mathbb R$. By Proposition \ref{pr: existence and uniqueness weak solution Yt} there exists a unique solution $Y$ to the SDE \eqref{eq: SDE for Yt} with initial value $y= x+u(0,x)$. Let $X$ be defined by $X_t= \psi(t, Y_t), t\in [0,T]$. By construction $X$ is a virtual solution of \eqref{eq: SDE with singular drift} with initial condition $X_0=x$.\\
\emph{Uniqueness.} Suppose that $Z:=(Z_t)_{t\geq0}$  is another virtual solution to \eqref{eq: SDE with singular drift}.  Then $\tilde Y$ defined by  $\tilde Y_t= \varphi(t,Z_t), t\in[0,T]$ is a solution to \eqref{eq: SDE for Yt} with initial value $\tilde Y_0 = \varphi(0, x)= x+u(0,x)$. Since equation \eqref{eq: SDE for Yt} admits uniqueness in law, the law of $\tilde Y$ coincides with the law of $Y$ and by the invertibility of $\varphi(t,\cdot)$ for each $t\in [0,T]$ we get that the laws of $X$ and $Z$ coincide.
\end{proof}

\subsection{Virtual solution as limit of classical solutions}\label{SFinal}

The concept of \emph{virtual solution} is very convenient in order to prove weak
existence and uniqueness;\ however, it may look a bit artificial. Moreover, a
priori, the virtual solution may depend on the parameter $\lambda$. These
problems are solved by the next proposition which identifies the virtual
solution (for any $\lambda$) as the limit of classical solutions. This result
relates also to the concept of solution introduced in \cite{basschen2}.

\begin{proposition}\label{pr: limit of classical sol}
Let $b_{n}:\left[  0,T\right]  \times\mathbb{R}^{d}\rightarrow\mathbb{R}^{d}$
be vector fields such that

\begin{itemize}
\item[(i)] $b_{n}\in C\left(  \left[  0,T\right]  ;C_{b}^{1}\left(
\mathbb{R}^{d};\mathbb{R}^{d}\right)  \right)  $ (bounded with bounded first derivatives);

\item[(ii)] $b_{n}\rightarrow b$ in $L^{\infty}\left(  \left[  0,T\right]
;H_{\tilde{q},q}^{-\beta}\right)  $.
\end{itemize}

Then the unique strong solution to the equation%
\begin{equation}
dX_{t}^{n}=\mathrm{d}W_{t}+b_{n}\left(  t,X_{t}^{n}\right)  \mathrm{d}t,\qquad
X_{0}=x,\label{eq n}%
\end{equation}
converges in law to the virtual solution $\left(  X,\mathbb{F}\right)  $ of
equation \eqref{eq: SDE with singular drift}.
\end{proposition}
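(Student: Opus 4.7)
The natural strategy is to lift everything through the Zvonkin-type transform and to work on the level of $Y^n_t := \varphi_n(t,X^n_t) = X^n_t + u_n(t,X^n_t)$, where $u_n$ denotes the unique mild solution of the PDE \eqref{eq: backwards PDE} associated with the smooth drift $b_n$. Since each $b_n$ satisfies hypothesis (i) of Proposition \ref{pro: classical solutions are virtual solutions}, the classical strong solution $X^n$ of \eqref{eq n} is also a virtual solution, hence by the equivalence established in Section \ref{SVirtual} the process $Y^n$ solves the SDE
\[
Y^n_t = y_n + \int_0^t \mu_n(s,Y^n_s)\,\mathrm ds + \int_0^t \sigma_n(s,Y^n_s)\,\mathrm dW_s,
\]
with $y_n = x + u_n(0,x)$, $\mu_n(t,y) = (\lambda+1)u_n(t,\psi_n(t,y))$ and $\sigma_n(t,y) = \nabla u_n(t,\psi_n(t,y)) + \mathrm I_d$, where $\psi_n = \varphi_n^{-1}$.

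The key inputs are already prepared in Lemma \ref{nuovo lemma}: under the assumption $b_n \to b$ in $L^{\infty}([0,T];H^{-\beta}_{\tilde q,q})$, we have uniform bounds $|u_n| + |\nabla u_n| \leq C$, uniform bound $|\nabla \psi_n| \leq 2$, and uniform convergence $u_n \to u$, $\nabla u_n \to \nabla u$, $\psi_n \to \psi$ on $[0,T]\times\mathbb{R}^d$. Consequently $\mu_n \to \mu$ and $\sigma_n \to \sigma$ uniformly on compact sets, with $\mu, \sigma$ bounded continuous and $\sigma$ uniformly non-degenerate; also $y_n \to y = x + u(0,x)$. The plan is then to prove that $Y^n \Rightarrow Y$ in law on $C([0,T];\mathbb{R}^d)$, where $Y$ is the unique weak solution from Proposition \ref{pr: existence and uniqueness weak solution Yt}, and to deduce the conclusion via the continuous map $(s,z)\mapsto \psi(s,z_s)$.

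Tightness of $(Y^n)$ follows from the uniform $L^\infty$ bounds on $\mu_n,\sigma_n$ (hence uniform Kolmogorov/BDG moment estimates of the form $\mathbb{E}|Y^n_t - Y^n_s|^{2m} \leq C_m|t-s|^m$), together with the convergence of the deterministic initial values $y_n$. To identify limit points, I would use the martingale problem formulation: for every test function $f \in C^2_b(\mathbb{R}^d)$ the process
\[
M^n_t := f(Y^n_t) - f(y_n) - \int_0^t \Bigl( \mu_n(s,Y^n_s)\cdot\nabla f(Y^n_s) + \tfrac12 \mathrm{tr}\bigl(\sigma_n\sigma_n^{T}(s,Y^n_s)\nabla^2 f(Y^n_s)\bigr) \Bigr) \mathrm ds
\]
is a martingale. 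Fixing a subsequential weak limit $Y^\infty$ and using the uniform convergence of $\mu_n,\sigma_n$ together with continuity and boundedness, one passes to the limit inside the integral to conclude that the analogous process built from $\mu,\sigma$ is a martingale for $Y^\infty$. The weak uniqueness granted by Proposition \ref{pr: existence and uniqueness weak solution Yt} then forces $Y^\infty \stackrel{d}{=} Y$, so the whole sequence converges. Finally, since $\psi_n \to \psi$ uniformly on $[0,T]\times\mathbb{R}^d$ and $\psi$ is continuous, the mapping $z\mapsto \psi(\cdot,z_\cdot) - \psi_n(\cdot,z_\cdot)$ vanishes uniformly, so $X^n = \psi_n(\cdot,Y^n) \Rightarrow \psi(\cdot,Y) = X$ in law on $C([0,T];\mathbb{R}^d)$.

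The main technical obstacle is the identification of the limit: one must justify exchanging the limit in $n$ with the time integral inside $M^n_t$. This is where the uniform (not merely pointwise) convergence supplied by Lemma \ref{nuovo lemma}(iii), combined with the uniform boundedness from Lemma \ref{nuovo lemma}(i)-(ii), is essential — otherwise there would be no dominated-convergence-type argument available, since the approximating coefficients $\mu_n,\sigma_n$ are composed with the random processes $Y^n$ whose laws are themselves varying with $n$. A standard way around this is to combine Skorokhod representation (to realise the weak convergence as an a.s.\ one on a common probability space) with the uniform convergence of $\mu_n,\sigma_n$ and dominated convergence for the drift integral, plus the usual Itô-isometry/BDG argument for the stochastic integral.
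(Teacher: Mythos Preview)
Your proposal is correct and follows essentially the same route as the paper: transform $X^n$ to $Y^n$ via $\varphi_n$, use Lemma \ref{nuovo lemma} for uniform bounds and uniform convergence of $u_n,\nabla u_n,\psi_n$, obtain tightness of $(Y^n)$ from moment estimates, identify the limit through the martingale problem combined with Skorokhod's representation and dominated convergence, invoke weak uniqueness from Proposition \ref{pr: existence and uniqueness weak solution Yt}, and finally pass back to $X^n=\psi_n(\cdot,Y^n)$ using the uniform convergence $\psi_n\to\psi$. One small remark: once you work at the level of the martingale problem there is no stochastic integral left to handle, so the ``It\^o-isometry/BDG argument for the stochastic integral'' you mention at the end is not needed --- the passage to the limit in $M^n_t$ involves only an ordinary time integral, exactly as the paper does.
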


\begin{proof}
We shorten the notation in the proof and write $X$ and $Y$ in place of
$\left(  X,\mathbb{F}\right)  $ and $\left(  Y,\mathbb{F}\right)  $
respectively. Recall that $\lambda$ has been chosen big enough at the
beginning of the section.

\textbf{Step 1} ($X^{n}$ as virtual solutions). {Let $u_{n}$ be the unique
classical solution of equation \eqref{eq: backwards PDE} replacing $b$ with
$b_{n}$. By Proposition \ref{pro: classical solutions are virtual solutions},
part (i), we have that the unique strong solutions $X^{n}$ of equations
(\ref{eq n}) are also virtual solutions. Here $Y^{n}_t=\varphi_{n}\left(
t,X_{t}^{n}\right)  $ satisfies equation \eqref{eq:
SDE for Yt} with $b_{n}$ replacing $b$. Let us denote by $\tilde{b}_{n}$
and $\tilde{\sigma}_{n}$ (respectively $\tilde{b}$ and $\tilde{\sigma}$)
the drift and diffusion coefficient of the equation for $Y^{n}$ (respectively
$Y$), that is
\begin{align*}
&  \tilde{b}_{n}(t,x):=(\lambda+1)u_{n}(t,\psi_{n}(t,x)),\\
&  \tilde{\sigma}_{n}(t,x):=\nabla u_{n}(t,\psi_{n}(t,x))+\mathrm{I}_{d}.
\end{align*}
}

\textbf{Step 2} (Upper bounds on $u_{n}$ and $\nabla u_{n}$,
uniformly in $n$). {Since $b_{n}$ converges to $b$ in $L^{\infty
}([0,T];H_{\tilde{q},q}^{-\beta})$ there exists $n_{0}$ such that for each
$n\geq n_{0}$ we have $\Vert b_{n}\Vert_{\infty,H_{\tilde{q},q}^{-\beta}}%
\leq2\Vert b\Vert_{\infty,H_{\tilde{q},q}^{-\beta}}$ and so we can apply Lemma
\ref{nuovo lemma} (i) and find a constant $C_{1}>0$ such that
\begin{align*}
\left\vert u_{n}\left(  r,z\right)  \right\vert  &  \leq C_{1}\\
\left\vert \nabla u_{n}\left(  r,z\right)  +\mathrm{I}_{d}\right\vert  &  \leq
C_{1},
\end{align*}
for all $(r,z)\in\lbrack0,T]\times\mathbb{R}^{d}$.}

\textbf{Step 3} (Tightness of the laws of $Y^{n}$). {For $0\leq s\leq t\leq T$
we have
\begin{align*}
\left\vert Y_{t}^{n}-Y_{s}^{n}\right\vert ^{4} &  \leq8\left(  \lambda
+1\right)  ^{4}\left(  \int_{s}^{t}\left\vert u_{n}\left(  r,\psi_{n}\left(
r,Y_{r}^{n}\right)  \right)  \right\vert \mathrm{d}r\right)  ^{4}\\
&  +8\left\vert \int_{s}^{t}\left[  \nabla u_{n}\left(  r,\psi_{n}\left(
r,Y_{r}^{n}\right)  \right)  +\mathrm{I}_{d}\right]  \mathrm{d}W_{r}%
\right\vert ^{4}.
\end{align*}
By the result in Problem 3.29 and Remark 3.30 of \cite{ks} (see also Theorem
4.36 in \cite{da-prato_zabczyk92} given even in Hilbert spaces) there is a
constant $C_{2}>0$ such that%
\begin{align*}
E\left[  \left\vert Y_{t}^{n}-Y_{s}^{n}\right\vert ^{4}\right]   &
\leq8\left(  \lambda+1\right)  ^{4}C_{1}^{4}\left(  t-s\right)  ^{4}\\
&  +8C_{2}E\left[  \left(  \int_{s}^{t}\left\vert \nabla u_{n}\left(
r,\psi_{n}\left(  r,Y_{r}^{n}\right)  \right)  +\mathrm{I}_{d}\right\vert
^{2}\mathrm{d}r\right)  ^{2}\right]  \\
&  \leq8\left(  \lambda+1\right)  ^{4}C_{1}^{4}\left(  t-s\right)  ^{4}%
+8C_{1}^{4}C_{2}\left(  t-s\right)  ^{2}.
\end{align*}
This obviously implies%
\[
E\left[  \left\vert Y_{t}^{n}-Y_{s}^{n}\right\vert ^{4}\right]  \leq
C\left\vert t-s\right\vert ^{2},
\]
for some constant $C>0$ depending on $T$ and independent of $n$ }%
(recall that $\lambda$ is given).{ Moreover the initial
condition $y_{0}$ is real and independent of $n$, thus a tightness criterion
(see Corollary 16.9, in \cite{kal02}) implies the
tightness of the laws of $Y^{n}$ in $C\left(  \left[  0,T\right]
;\mathbb{R}^{d}\right)  $. }

\textbf{Step 4} (Weak convergence of $Y^{n}$ to a solution $Y$). Let $Y$ be
the process defined by $Y_{t}=\varphi\left(  t,X_{t}\right)  $, %
$t\in\left[  0,T\right]  $, where $X$ is the virtual solution in the
statement of the proposition. We want to prove that the laws of $Y^{n}$
converge weakly to the law of $Y$. Denote by $\mu^{n}$ and $\mu$ the laws of
$Y^{n}$ and $Y$, respectively. To show weak convergence of $\mu^{n}$ to $\mu$
it is enough to prove that, for any subsequence $\mu^{n_{k}}$, there exists a
further subsequence $\mu^{n_{k_{j}}}$ which converges weakly to $\mu$. Given
$\mu^{n_{k}}$, a converging subsequence $\mu^{n_{k_{j}}}$ exists, since
$\{\mu^{n}\}$ is tight by Step 3, so $\{\mu^{n_{k}}\}$ is also tight. Denote
by $\mu^{\prime}$ the weak limit of $\mu^{n_{k_{j}}}$ as $j\rightarrow\infty$.
If we prove that $\mu^{\prime}=\mu$, for any choice of the subsequence
$\mu^{n_{k}}$, then we have that the whole sequence $\mu^{n}$ converges to
$\mu$. 
%To summarize our program:\ we have a subsequence $\mu^{n_{k_{h}}}$ (the laws of $Y^{n_{k_{h}}}$) which converges weakly, as $h\rightarrow\infty$, to some measure $\mu^{\prime}$; we want to show that $\mu^{\prime}=\mu$ (the law of $Y$). 
Just to simplify notations, we shall denote $\mu^{n_{k_{j}}}$ by
$\mu^{n}$ and assume that $\mu^{n}$ converges weakly, as $n\rightarrow\infty$,
to $\mu^{\prime}$. 

Since $\mu^{n}\rightharpoonup\mu^{\prime}$, by Skorokhod's representation
theorem there exists a probability space $(\tilde{\Omega}%
,\tilde{\mathcal{F}},\tilde{\mathbb{P}})$ and random variables
$\tilde{Y}^{n}$ (resp. $\tilde{Y}$) taking values in 
$C([0,T];\mathbb{R}^{d})$ endowed with the Borel $\sigma$-field, with
laws $\mu^{n}$ (resp. $\mu^{\prime}$), such that $\tilde{Y}^{n}\rightarrow
\tilde{Y}$ in $C([0,T];\mathbb{R}^{d})$ a.s. If we prove
that $\tilde{Y}$ is a weak solution of equation \eqref{eq:
SDE for Yt}, since uniqueness in law holds for that equation and $Y$ is
another weak solution, we get $\mu^{\prime}=\mu$. To prove that $\tilde{Y}$ is
a weak solution of equation \eqref{eq:
SDE for Yt}, it is sufficient to prove (see Theorem 18.7 in \cite{kal02}) that
$\tilde{Y}$ solves the following martingale problem: for every $g\in
C^{\infty}(\mathbb{R}^{d};\mathbb{R})$ with compact support the
process %
\[
\tilde{M}_{t}:=g({\tilde{Y}}_{t})-g({\tilde{Y}}_{0})-\int_{0}^{t}%
L_{r}g({\tilde{Y}}_{r})\mathrm{d}r
\]
is a martingale, where{
\[
L_{r}:=\frac{1}{2}\sum_{i,j=1}^{d}\tilde{a}_{i,j}(r,\cdot)\frac{\partial^{2}%
}{\partial x_{i}\partial x_{j}}+\sum_{i=1}^{d}\tilde{b}_{i}(r,\cdot
)\frac{\partial}{\partial x_{i}}%
\]
}and $\tilde{a}(r,x):=\tilde{\sigma}(r,x)\tilde{\sigma}^{\ast}%
(r,x)$. Let $0\leq s<t\leq T$ and let $F:C([0,s];\mathbb{R}%
^{d})\rightarrow\mathbb{R}$ be a bounded continuous functional. To
prove that $\tilde M$ is a martingale it is enough to show that 
\begin{equation}
E\left[  \left(  \tilde{M}_{t}-\tilde{M}_{s}\right)  F({\tilde{Y}}%
_{r};r\leq s)\right]  =0\label{eq: expectation martingale y}%
\end{equation}
on the probability space $(\tilde{\Omega},\tilde{\mathcal{F}}%
,\tilde{\mathbb{P}})$.

Recall that $Y^{n}$ satisfies equation \eqref{eq: SDE for Yt} (with $b$
replaced by $b_{n}$). Hence, by It\^{o}'s formula,%
\[
M_{t}^{n}:=g({Y_{t}^{n}})-g({Y_{0}^{n}})-\int_{0}^{t}L_{r}^{n}g({Y_{r}^{n}%
})\mathrm{d}r
\]
is a martingale, where{
\[
L_{r}^{n}:=\frac{1}{2}\sum_{i,j=1}^{d}\tilde{a}_{i,j}^{n}(r,\cdot
)\frac{\partial^{2}}{\partial x_{i}\partial x_{j}}+\sum_{i=1}^{d}\left(
\tilde{b}_{n}\right)  _{i}(r,\cdot)\frac{\partial}{\partial x_{i}}%
\]
}and $\tilde{a}^{n}(r,x):=\tilde{\sigma}_{n}(r,x)\tilde{\sigma}_{n}^{\ast
}(r,x)$. Therefore
\[
E\left[  \left(  M_{t}^{n}-M_{s}^{n}\right)  F({Y}_{r}^{n};r\leq s)\right]
=0.
\]
This identity depends only on the law of $Y^{n}$, hence%
\begin{equation}
E\left[  \left(  \tilde{M}_{t}^{n}-\tilde{M}_{s}^{n}\right)
F({\tilde{Y}}_{t}^{n};r\leq s)\right]
=0.\label{eq: expectation martingale y^n}%
\end{equation}
Let us denote by $\tilde{{Z}}_{s,t}^{n}$ (resp. $\tilde{{Z}}_{s,t}$)
the random variable inside the expectation in
\eqref{eq: expectation martingale y^n} (resp. \eqref{eq:
expectation martingale y}).  Each factor in $\tilde{{Z}}_{s,t}^{n}$ (resp. $\tilde{{Z}%
}_{s,t}$) is uniformly bounded, by the boundedness of $F$, $g$ and its
derivatives, $\tilde{b}_{n}$ and $\tilde{\sigma}_{n}$ (due to boundedness of $u$ and $\nabla u$). Moreover $\tilde{{Z}}_{s,t}^{n}\rightarrow
\tilde{Z}_{s,t}$ a.s. since ${\tilde{Y}}^{n}(\omega)\rightarrow
{\tilde{Y}}(\omega)$ for almost every $\omega$ uniformly on compact sets, $F$, $g$ and its derivatives are continuous, $\tilde{b}_{n}\rightarrow\tilde{b}$
and $\tilde{\sigma}_{n}\rightarrow\tilde{\sigma}$ uniformly on compact sets
(remind that $\psi_{n}\rightarrow\psi,u^{n}\rightarrow u,\nabla u^{n}\rightarrow
\nabla u$ uniformly  by Lemma \ref{nuovo lemma} (iii)).
Therefore, by Lebesgue's dominated convergence theorem we conclude that
\eqref{eq: expectation martingale y} holds.

\textbf{Step 5} (Weak convergence of $X^{n}$ to $X$). The final step consists
in showing that $X^{n}$ converges to $X$ in law. Recall that $X^{n}=\psi
_{n}(\cdot,Y^{n})$.   By Lemma \ref{lm: phi is invertible}, $\psi$ is uniformly continuous on compact sets and $Y^{n}\rightarrow Y$ in law by Step 4; it is then easy to deduce that $\psi(\cdot,Y^{n})\rightarrow\psi(\cdot,Y)$ in law. Indeed given a continuous and bounded functional $F:C([0,T];\mathbb R^d )\to \mathbb R$, the functional $\eta \mapsto F(\psi(\cdot, \eta(\cdot))) $ is still a continuous bounded functional.  Moreover $\psi_{n}(\cdot,Y^{n})-\psi(\cdot,Y^{n})\rightarrow0$ in $C([0,T];\mathbb{R}%
^{d})$ $P$-a.s. since $\psi_{n}\rightarrow\psi$ uniformly by Lemma
\ref{nuovo lemma} (iii); hence $\psi_{n}(\cdot,Y^{n})- \psi(\cdot,Y^{n})\to 0$ in probability. Then by Theorem 4.1 in \cite[Section 4,
Chapter 1]{billingsley68} we have that $\psi_{n}(\cdot,Y^{n})\rightarrow
\psi(\cdot,Y)$ weakly, which concludes the proof.
\end{proof}

Examples of $b_n$ which verify (ii) in Proposition \ref{pr: limit of classical sol} are easily obtained by convolutions of $b$ against a sequence of mollifiers converging to a Dirac measure.

\vspace{15 pt}
{\bf Acknowledgments.} 
The authors wish to thank the anonymous referee for her/his careful revision which helped to clarify and improve considerably  the initial version of this paper.

\vspace{5pt}
The third named author benefited from the
 support of the ``FMJH Program Gaspard Monge in optimization and operation
 research'' (Project 2014-1607H).
The  second and third named authors were also partially supported
by the ANR Project MASTERIE 2010 BLAN 0121 01.

%\nocite{*}
\bibliographystyle{plain}
\bibliography{SingularDrift-biblio}

\end{document}